\def\squarebox#1{\hbox to #1{\hfill\vbox to #1{\vfill}}}
\newtheorem{Thm}{Theorem}[section]
\newtheorem{lem}{Lemma}[section]
\newtheorem{remark}{Remark}[section]
\numberwithin{equation}{section}
\newcommand{\bel}{\begin{equation} \label}
\newcommand{\ee}{\end{equation}}
\newcommand{\re}{\mathfrak R}
\newcommand{\R}{\mathbb{R}}
\def\epsilon{\varepsilon}
\def\phi {\varphi}
\newtheorem{prop}{Proposition}[section]
\providecommand{\abs}[1]{\left\lvert#1\right\rvert}
\providecommand{\norm}[1]{\left\lVert#1\right\rVert}
\numberwithin{equation}{section}
\renewcommand{\leq}{\leqslant}
\renewcommand{\geq}{\geqslant}
\providecommand{\abs}[1]{\left\lvert#1\right\rvert}
\providecommand{\norm}[1]{\left\lVert#1\right\rVert}
\def\beq{\begin{equation}}
\def\eeq{\end{equation}}
\newcommand{\bea}{\begin{eqnarray}}
\newcommand{\eea}{\end{eqnarray}}
\newcommand{\beas}{\begin{eqnarray*}}
\newcommand{\eeas}{\end{eqnarray*}}
\author[Yavar Kian]{Yavar Kian}
\address{Univ Rouen Normandie, CNRS, Normandie Univ, LMRS UMR 6085, F-76000 Rouen, France.}
\email{	yavar.kian@univ-rouen.fr}
\author{Faouzi Triki}
\address{F. Triki, Laboratoire Jean Kuntzmann,
UMR CNRS 5224, Universit\'e Grenoble-Alpes, 700 Avenue  Centrale,
38401 Saint-Martin- d'H\`eres, France}
\email{faouzi.triki@univ-grenoble-alpes.fr}
\begin{document}

\begin{abstract}
In  photoacoustic imaging the objective  is to determine the optical properties of biological tissue from boundary measurement of the generated acoustic wave. Here, we propose a restriction to piecewise constant media parameters. Precisely we assume that the acoustic speed and the optical coefficients take two different constants inside and outside a star shaped inclusion. We show that  the inclusion can be uniquely recovered from a single measurement. We also derive a stability estimate of Lipschitz  type of the inversion. The proof of stability is based on an integral representation and a new observability inequality  for the wave equation with piecewise 
constant speed  that is of interest itself. 
\end{abstract}

\title[]{Recovery of an inclusion  in  Photoacoustic imaging}

\keywords{Inverse problem, Wave equation,  Observability inequality,  Piecewise constant speed, Stability estimate}
\maketitle

\section{Introduction}
\label{sec-intro}
\setcounter{equation}{0}
Photoacoustic imaging (PAI)~\cite{ ABGJ1, KuKu-HMMI10,LiWa-PMB09,Wang-Book09, AKK1, CAB, BS} is an  imaging technique  that couples  acoustic and optical waves to achieve high-resolution imaging of optical properties of  biological tissues. In a typical (PAI)  experiment, near infra-red (NIR) photons are radiated  into the biological tissue which is heated up due to the absorption of the  electromagnetic energy. The heating  then results in the expansion of the tissue which causes the generation
of  a pressure wave. The measurement of this later on the boundary is then used to reconstruct  the optical absorption and  diffusion coefficients of the tissue.\\
 
The inversion procedure in (PAI) proceeds in two steps. In the first step, the initial pressure field which  is proportional to the local absorbed energy inside the tissue, is recovered  from pressure wave boundary data. Mathematically speaking, if the acoustic speed is known, this is a linear inverse source problem for the acoustic wave equation~\cite{AgKuKu-PIS09,AmBrJuWa-LNM12, FiHaRa-SIAM07,Haltmeier-SIAM11B,HaScSc-M2AS05,Hristova-IP09,KiSc-SIAM13,KuKu-EJAM08,Kunyansky-IP08,Nguyen-IPI09,PaSc-IP07,QiStUhZh-SIAM11,StUh-IP09}. In the second step, we reconstruct the optical absorption and diffusion coefficients using internal data recovered from  the first inversion~\cite{BaUh-IP10,MaRe-CMS14,NaSc-SIAM14,ReGaZh-SIAM13, BR2, Triki, BR1, Triki2}. \\

 Photoacoustic imaging provides both contrast and resolution. The contrast in (PAI) is mainly due to the sensitivity of the optical absorption and diffusion coefficients  of the tissue in the near infra-red regime. For instance, different biological tissues absorb NIR photons differently. The resolution in (PAI) comes in when the acoustic waves propagate  inside the biological tissue without attenuation, and therefore the corresponding  wavelength  provides a good resolution (usually submillimeter).\\

Due to the lack of information on the spatial distribution within heterogeneous biological tissue and since more than 80$\%$ of the tissue is formed by water  the acoustic speed  in (PAI) is assumed to be a constant equal to  the acoustic speed of the water (such as $1540$ m/s). However, in practice  the true speed changes as the acoustic waves propagate through different types of soft tissues. Indeed experimental studies suggest  that  the acoustic velocity in biological  tissues may have variations of up to $10\%$ of the acoustic velocity in water \cite{Xu, Yoon}.  Failure to compensate the acoustic speed variations in heterogeneous biological tissue leads to aberration artefacts deteriorating the (PAI) image quality.  Few mathematical works already proposed to address this issue by simultaneously determining the acoustic speed and the initial pressure \cite{Stefanov,Knox,Liu,Kian}. However the  inverse problem to recover both the speed and the initial state for wave equations  without additional measurements seem to be severely ill-posed \cite{Stefanov}. Most of the existing results show the uniqueness of solution to the inverse problem by assuming
 additional technical conditions on both the acoustic  speed and the initial pressure \cite{Knox,Liu,Kian}. \\
 
In this work we are interested in solving  the inverse problem with non-smooth parameters.  In particular we assume that the acoustic speed and optical coefficients are piecewise constant functions with the same set of discontinuities. This was established in \cite[Chapter 3]{Vauthrin},  by considering the biological tissue as a mixture of blood and water. The acoustic velocity in water being different from the acoustic velocity in blood, the velocity of the biological tissue then depends on the spatial distribution  of water and blood in the mixture. Furthermore, the optical absorption in the mixture will be mainly caused by hemoglobin (as in a biological tissue), and will therefore also depend on the location of blood in the mixture. We assume here that the blood occupies a bounded inclusion, and the initial state is a function of the characteristic function of such inclusion.  Since the value of the velocity of the blood and water are known the (PAI) inverse problem becomes a problem of 
 identification an inclusion from the measurement of the acoustic wave on the boundary.   \\

The paper is organized as follows. In section 2, we introduce the (PAI) inverse problem and announce the main results of the paper. Assuming that the initial state 
which represents the absorption map of the biological tissue, satisfies some reverse inequality with respect to the set of discontinuity of the acoustic speed, we obtain 
a Lipschitz stability estimate. The main result is summarized in Theorem \ref{t1}.  The well-posedness of the forward problem is studied in section 3. Precisely, the existence  and uniqueness of solution are provided  in Proposition \ref{p1}. In section 4, we derive a new observability inequality for the wave equation with piecewise 
constant speed that is of interest itself. The proof of the stability estimate is based on an integral representation formula of the pressure wave exposed in Theorem  \eqref{t6}. The derivation of this latter is detailed in section 5. Finally, section 6 is devoted to the proof of the main stability estimate of Theorem \ref{t1}.

%%%%%%%%%%%%%%%%%%%%%%%%%%%%%
\section{Statement of the problem and main result }\label{section-main}
In this section we introduce the (PAI)  inverse problem and announce the main results of the paper. \\

Let $\Omega$ be a $C^3$ connected bounded open set of $\R^3$. Let $a$ be a fixed constant within $(\frac{1}{2},1)$ and consider $\omega$ an open set of $\R^3$ such that $\overline{\omega}\subset\Omega$, and define 
\bea \label{speed}
c:=1+(a-1)\mathds{1}_\omega,
\eea
where $\mathds{1}_\omega$ is the characteristic function of the inclusion $\omega$.
We consider the following initial boundary value problem 
\begin{equation}\label{eq1}\left\{\begin{array}{ll}c^{-2}\partial_t^2p-\Delta_x p=0,\quad &\textrm{in}\ (0,+\infty)\times\Omega,\\  p(0,\cdot)= f,\quad \partial_tp(0,\cdot)=g,\quad &\textrm{in}\ \Omega,\\ \partial_\nu p+\beta\partial_tp=0,\quad &\textrm{on}\ (0,+\infty)\times\partial\Omega,\end{array}\right.\end{equation}
where $\beta\in C^2(\partial\Omega;(0,+\infty))$ represents the damping of the acoustic transducers on the boundary. We assume here that $f\in H^2(\Omega)$, $g\in H^1(\Omega)$, and they satisfy the following compatibility conditions
\bel{p1a}\int_\Omega c^{-2}gdx+\int_{\partial\Omega}\beta fd\sigma(x)=0,\quad \partial_\nu f+\beta g=0\textrm{ on }\partial\Omega.
\ee

We will assume here that the initial  condition $f$ depends on the acoustic speed $c$, that is $f = f(c, \cdot) \in  H^2(\Omega)$.\\

Under  assumption \eqref{p1a}, we show in Proposition \ref{p1} the unique existence of a solution $p$ of \eqref{eq1} that lies  in $ C([0,+\infty);H^2(\Omega))\cap C^2([0,+\infty);L^2(\Omega))$,  and we consider  the (PAI) inverse problem   to determine  simultaneously  the sound speed $c$  and the initial pressure $f$ from the knowledge of the pressure data  $p|_{\partial \Omega\times (0, T)}$, with $T>0$ sufficiently large. The objective of this paper is to derive stability estimate for this inverse problem. \\

%%%%%%%%%%%%%%%%%

We fix $M>0$ and we introduce the  subset $\mathcal F$ of  $L^\infty(\Omega)\times H^2(\Omega)\times H^1(\Omega)$ such that for all $(c,f,g)\in\mathcal F$ the following conditions are fulfilled:\\
(i) the map $c$ takes the form \eqref{speed}.\\
(ii)  $g\in H^1(\Omega)$ solves $\Delta g=0$ in $\Omega$ and satisfies the boundary condition $g=-\beta^{-1}\partial_\nu f$ on $\partial\Omega$  \cite{Ren}.\\
(iii) $ c\rightarrow f(c, \cdot)$ is a measurable function on $L^\infty(\Omega)$, and  satisfies the following bound for almost every $c\in L^\infty(\Omega)
$ with $\|c\|_{L^\infty(\Omega)} \leq 1$:
\bel{ass1}\norm{f(c, \cdot)}_{H^2(\Omega)}\leq M.\ee
(iv) For $j=1,2$ and for all $c_j$ of the form \eqref{speed} with $\omega=\omega_j$, there exists $d>0$ independent of $a$ such that
\bel{ass2}\norm{f(c_2, \cdot)-f(c_2, \cdot)}_{H^1(\Omega)}\geq d\norm{\mathds{1}_{\omega_1} -\mathds{1}_{\omega_2}}_{L^\infty(\Omega)},\ee
and we have $f(c_1, \cdot)=f(c_2, \cdot)$ and $\partial_\nu f(c_1, \cdot)=\partial_\nu f(c_2, \cdot)$ on $\partial\Omega$.

\begin{remark}
Notice that since the normal derivative of the initial pressure stays the same for all $c$
of the form \eqref{speed},  we deduce from condition (ii)  that $g$ is independent of $c$. The reverse inequality \eqref{ass2} indiquates that  one can determine
in a stable way the set of discontinuity  of the optical coefficients which is the same as of the acoustic speed from the knowledge of  absorption map
\cite{NaSc-SIAM14}. 
\end{remark}

The following stability estimate  is the  principal result of this article. 

\begin{Thm}\label{t1} For $j=1,2$, let $(c_j,f(c_j,\cdot), g)\in\mathcal F$, with $c_j$ of the form \eqref{speed}   with $\omega=\omega_j$,
and denotes by $p_j$ the solution of \eqref{eq1}. We fix $T=4\textrm{diam}(\Omega)$ and we assume  that $\omega_2$ is star shaped.  Then,    there exists  a constant $a_0=a_0( \beta,  \Omega,M) \in (\frac 1 2, 1)$ such that for any $a\in (a_0, 1) $  we can find  $C= C(\beta,  \Omega,M,a)>0$ such that the following  stability estimates hold

 \bea \label{maineq1}
 \|c_1-c_2\|_{L^\infty(\Omega)} \leq C \|  p_1 -   p_2\|_{H^1((0, T)\times\partial \Omega  )},
 \eea
 \bea \label{maineq2}
 \|f(c_1, \cdot) -f(c_2, \cdot)\|_{H^1(\Omega)} \leq C \left(\|  p_1 -   p_2\|_{H^{\frac{3}{2}}((0, T)\times\partial \Omega  )}+\|  t^{-\frac{1}{2}}(\partial_t p_1 -   \partial_tp_2)\|_{L^2((0, T)\times\partial \Omega  )}\right).
 \eea

\end{Thm}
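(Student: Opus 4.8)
The strategy is to combine three ingredients already announced in the paper: the integral representation formula for the pressure wave of Theorem~\eqref{t6}, the new observability inequality for the wave equation with piecewise constant speed of Section~4, and the reverse inequality \eqref{ass2} built into the class $\mathcal F$. Set $p:=p_1-p_2$ and $c:=c_1-c_2=(a-1)(\mathds{1}_{\omega_1}-\mathds{1}_{\omega_2})$. Since $f(c_1,\cdot)=f(c_2,\cdot)$ and $\partial_\nu f(c_1,\cdot)=\partial_\nu f(c_2,\cdot)$ on $\partial\Omega$, and $g$ is $c$-independent by condition (ii) and the Remark, the difference $p$ solves a wave equation driven only by the mismatch of the principal part: writing $c_1^{-2}\partial_t^2 p_1-\Delta p_1=0$ and $c_2^{-2}\partial_t^2 p_2-\Delta p_2=0$ and subtracting, one gets
\begin{equation*}
c_2^{-2}\partial_t^2 p-\Delta p=(c_2^{-2}-c_1^{-2})\partial_t^2 p_1=:F
\end{equation*}
with zero Cauchy data $p(0,\cdot)=f(c_1,\cdot)-f(c_2,\cdot)$, $\partial_t p(0,\cdot)=0$ — wait, more precisely the initial datum for $p$ is $f(c_1,\cdot)-f(c_2,\cdot)$, which is exactly the quantity controlled in \eqref{maineq2}, while the source $F$ is supported in $\overline{\omega_1}\cup\overline{\omega_2}$ and its size is governed by $\|c_1-c_2\|_{L^\infty}$. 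The boundary condition for $p$ is the same damped one, $\partial_\nu p+\beta\partial_t p=0$ on $\partial\Omega$.

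\textbf{Step 1: observability.} First I would apply the observability inequality of Section~4 to $p$. With $T=4\,\mathrm{diam}(\Omega)$ chosen large enough to meet the geometric control / star-shapedness hypothesis on $\omega_2$, this inequality bounds an interior energy-type norm of $p$ — in particular $\|p(0,\cdot)\|_{H^1(\Omega)}$ together with a weighted norm of the source — by the boundary observation $\|p\|_{H^1((0,T)\times\partial\Omega)}$ plus, possibly, a lower-order contribution of the source $F$. The star-shapedness of $\omega_2$ and the condition $a\in(a_0,1)$ enter here: they guarantee the multiplier (Rellich-type) identity for the piecewise-constant metric has the right sign across the interface $\partial\omega_2$, i.e. that no energy is trapped, so that the observability constant is uniform in the class and finite. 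This is the step I expect to be the crux, and it is precisely where Section~4's inequality does the heavy lifting.

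\textbf{Step 2: absorbing the source.} The source $F=(c_2^{-2}-c_1^{-2})\partial_t^2 p_1$ satisfies $\|F\|\lesssim \|c_1-c_2\|_{L^\infty}\,\|\partial_t^2 p_1\|$, and by the forward well-posedness of Proposition~\ref{p1} together with the a priori bound \eqref{ass1}, $\|\partial_t^2 p_1\|$ on the relevant space-time slab is $\leq C(M,\beta,\Omega,a)$. So $F$ contributes a term $C\|c_1-c_2\|_{L^\infty}$ to the right-hand side. On the other hand, the integral representation of Theorem~\eqref{t6} expresses $p$ (and hence its Cauchy data and, via restriction, its boundary trace) in terms of the source $F$ and the geometry; reading this representation on $\partial\Omega$ shows that $\|c_1-c_2\|_{L^\infty}$ is \emph{controlled from below} by the boundary data of $p$, up to the initial-datum term $\|f(c_1,\cdot)-f(c_2,\cdot)\|_{H^1(\Omega)}$. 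Combining with \eqref{ass2}, which gives $\|f(c_1,\cdot)-f(c_2,\cdot)\|_{H^1(\Omega)}\geq d\,\|\mathds{1}_{\omega_1}-\mathds{1}_{\omega_2}\|_{L^\infty}=\frac{d}{|a-1|}\|c_1-c_2\|_{L^\infty}$, one gets a closed system of inequalities relating $\|c_1-c_2\|_{L^\infty}$, $\|f(c_1,\cdot)-f(c_2,\cdot)\|_{H^1}$, and the boundary norms of $p$.

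\textbf{Step 3: closing the loop.} Choosing $a_0$ close enough to $1$ makes the coupling coefficient between the $c$-estimate and the $f$-estimate small (both the factor $|a-1|$ and the observability constant degenerate controllably as $a\to1$, since at $a=1$ the speed is constant and the classical theory applies), so an absorption argument decouples the two estimates: \eqref{maineq1} follows by feeding the observability bound of Step~1 and the lower bound of Step~2 against each other, and then \eqref{maineq2} follows by inserting the now-established bound on $\|c_1-c_2\|_{L^\infty}$ back into the Cauchy-data estimate for $p(0,\cdot)=f(c_1,\cdot)-f(c_2,\cdot)$ coming from the observability inequality — the extra $H^{3/2}$ boundary norm and the weighted $\|t^{-1/2}(\partial_tp_1-\partial_tp_2)\|_{L^2}$ term in \eqref{maineq2} arise from the trace regularity and the behaviour near $t=0$ in that estimate. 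The main obstacle, as noted, is Step~1: establishing the observability inequality with a constant that is uniform over the admissible inclusions and that does not blow up as $a\uparrow1$, which is exactly what the new Section~4 result is designed to supply and why the star-shapedness of $\omega_2$ is assumed.
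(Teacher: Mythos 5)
Your overall architecture matches the paper's: \eqref{maineq1} is obtained by playing the representation formula of Theorem \ref{t6} against the reverse inequality \eqref{ass2}, with the factor $1-a$ providing the absorption, and \eqref{maineq2} is obtained from the observability inequality applied to the difference of solutions. However, two steps are stated in a form that would fail as written. First, for \eqref{maineq1} you propose to ``read the representation on $\partial\Omega$'' and to feed it into an observability bound for $p$ as a ``closed system''. In the paper no observability inequality is invoked at this stage: Theorem \ref{t6} already gives the interior identity $\mathcal G[c_2^{2}(c_1^{-2}-c_2^{-2})]-(f_1-f_2)=\mathcal I[\beta\partial_t p]+\mathcal J[p]$ in $H^1_0(\Omega)$, whose right-hand side is bounded by $C\|p\|_{H^1((0,T)\times\partial\Omega)}$ with $C$ independent of $a$ by \eqref{t6bbb}. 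The crux is then the quantitative bound $\|\mathcal G[c_2^{2}(c_1^{-2}-c_2^{-2})]\|_{H^1(\Omega)}\le 6C(1-a)\|\mathds{1}_{\omega_1}-\mathds{1}_{\omega_2}\|_{L^\infty(\Omega)}$ with $C=C(\Omega,\beta,M)$ \emph{independent of} $a$, coming from \eqref{t6b} and $c_1^{-2}-c_2^{-2}=(a^{-2}-1)(\mathds{1}_{\omega_2}-\mathds{1}_{\omega_1})$; this term is absorbed by $\|f_1-f_2\|_{H^1(\Omega)}\ge d\|\mathds{1}_{\omega_1}-\mathds{1}_{\omega_2}\|_{L^\infty(\Omega)}$ as soon as $1-a<d/(6C)$, which is what fixes $a_0$. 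Your version of the absorption (the factor $d/|a-1|$ blowing up) is the same mechanism, but your supporting claim that ``the observability constant degenerates controllably as $a\to1$'' is neither true nor needed: with $T=4\,\mathrm{diam}(\Omega)$ and $a>3/4$ the constant in Proposition \ref{p3} is uniformly bounded. What actually has to be verified, and what your plan does not address, is the $a$-uniformity of the operator norms in Theorem \ref{t6}.

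Second, you cannot ``apply the observability inequality of Section 4 to $p$'' directly: Proposition \ref{p3} concerns solutions of \eqref{eq2} with homogeneous Dirichlet boundary condition, whereas $p=p_1-p_2$ satisfies the damped condition $\partial_\nu p+\beta\partial_t p=0$ and has a nonzero Dirichlet trace. The missing step is the lifting: since $p(0,\cdot)=\partial_tp(0,\cdot)=0$ on $\partial\Omega$, one constructs $G\in H^2((0,T)\times\Omega)$ with $G=p$ on $(0,T)\times\partial\Omega$ and vanishing Cauchy data at $t=0$, whose $H^2$ norm is controlled by exactly $\|p\|_{H^{3/2}((0,T)\times\partial\Omega)}+\|t^{-1/2}\partial_tp\|_{L^2((0,T)\times\partial\Omega)}$ (Lions--Magenes), and then applies Proposition \ref{p3} to $v=p-G$, treating $\partial_t^2G-c_2^2\Delta G$ and $c_2^{2}(c_1^{-2}-c_2^{-2})\partial_t^2p_1$ as sources (the latter already controlled via \eqref{maineq1}) and $\partial_\nu v=-\beta\partial_tp-\partial_\nu G$ as boundary observation. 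This decomposition is the source of the specific right-hand side of \eqref{maineq2}; attributing those norms vaguely to ``trace regularity and the behaviour near $t=0$'' skips the step that makes the observability inequality applicable at all. With these two repairs your plan coincides with the paper's proof.
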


\begin{remark}
Notice that the assumption $a\in (a_0, 1) $ is in agreement  with the physical setting. Indeed the acoustic speed in the biological tissue   can vary up to  $10\%$ from the constant  acoustic velocity in water \cite{Xu, Yoon}. Here the acoustic speed in the water is normalized to one and  variation in the acoustic speed is bounded by $1-a_0$. 
\end{remark}

%%%%%%%%%%%%%%%%%%%%%%
\section{Forward problem}
%%%%%%%%%%%%%%%%%%%%%%%

\label{section-forward}
In this section we study the well-posedness of the problem \eqref{eq1}. For equation with smooth coefficients, the well-posedness of  \eqref{eq1} can be deduced by following the analysis of \cite{LR}. Nevertheless, since we deal here with non-smooth sound speeds coefficients $c\in L^\infty(\Omega)$, such properties need to be considered more carefully. In addition, we show in our results that the solution of \eqref{eq1} can be estimated independently of the parameter $a$ appearing in \eqref{speed}.

We denote by $L^2(\Omega;c^{-2}dx)$ the space of measurable functions $h$ satisfying
$$\int_\Omega c^{-2}|h|^2dx<\infty.$$
We associate with $L^2(\Omega;c^{-2}dx)$ the scalar product
$$\left\langle u,v\right\rangle_{L^2(\Omega;c^{-2}dx)}=\int_\Omega c^{-2}u\overline{v}dx$$
and we recall that $L^2(\Omega;c^{-2}dx)$ is an Hilbert space. Using the fact that $0<a\leq c\leq1$, we deduce that $L^2(\Omega;c^{-2}dx)=L^2(\Omega)$ with equivalent norms.
We denote by $\mathcal H$ the linear subspace of $H^1(\Omega)\times L^2(\Omega;c^{-2}dx)$ of  elements of $(u_0,u_1)\in H^1(\Omega)\times L^2(\Omega;c^{-2}dx)$ satisfying the condition
\bel{cond1}\int_\Omega c^{-2}u_1dx+\int_{\partial\Omega}\beta u_0d\sigma(x)=0.\ee
We can prove the following.
\begin{lem}\label{l1} There exists a constant $C>0$ depending only on $\beta$ and $\Omega$ such that, for all $(u_0,u_1)\in \mathcal H$, we have
\bel{l1a}\norm{u_0}_{H^1(\Omega)}^2+\norm{u_1}_{L^2(\Omega;c^{-2}dx)}^2\leq  C\int_\Omega(|\nabla u_0|^2+c^{-2}| u_1|^2)dx.\ee

\end{lem}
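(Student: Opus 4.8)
The plan is to argue by contradiction using a compactness argument, in the spirit of a Poincaré–Wirtinger type inequality adapted to the constraint \eqref{cond1}. The right-hand side of \eqref{l1a} defines a seminorm on $\mathcal H$; the issue is that $u\mapsto\left(\int_\Omega|\nabla u_0|^2+c^{-2}|u_1|^2\,dx\right)^{1/2}$ fails to control the $L^2$-norm of $u_0$ only through the constant functions, and the linear constraint \eqref{cond1} together with $\beta>0$ on $\partial\Omega$ is precisely what rules out nonzero constants. So suppose \eqref{l1a} fails: then there is a sequence $(u_0^k,u_1^k)\in\mathcal H$ with $\norm{u_0^k}_{H^1(\Omega)}^2+\norm{u_1^k}_{L^2(\Omega;c^{-2}dx)}^2=1$ while $\int_\Omega(|\nabla u_0^k|^2+c^{-2}|u_1^k|^2)\,dx\to 0$. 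The second condition forces $\nabla u_0^k\to 0$ in $L^2(\Omega)^3$ and $u_1^k\to 0$ in $L^2(\Omega;c^{-2}dx)$, hence in $L^2(\Omega)$ since the norms are equivalent.

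Next I would extract a subsequence with $u_0^k\rightharpoonup u_0$ weakly in $H^1(\Omega)$; by Rellich compactness $u_0^k\to u_0$ strongly in $L^2(\Omega)$, and combined with $\nabla u_0^k\to 0$ this gives $u_0^k\to u_0$ strongly in $H^1(\Omega)$ with $\nabla u_0\equiv 0$, so $u_0$ is a constant $\kappa$. Since $\Omega$ is connected and bounded this is consistent. From the normalization and $u_1^k\to 0$ we get $|\Omega|^{1/2}|\kappa|=\norm{u_0}_{L^2(\Omega)}\neq 0$ up to checking the $H^1$ norm; more precisely $\norm{u_0^k}_{H^1(\Omega)}^2+\norm{u_1^k}^2_{L^2(\Omega;c^{-2}dx)}\to\norm{u_0}_{H^1(\Omega)}^2=|\Omega|\,|\kappa|^2=1$, so $\kappa\neq 0$. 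Now pass to the limit in the constraint \eqref{cond1}: the trace map $H^1(\Omega)\to L^2(\partial\Omega)$ is continuous, so $u_0^k|_{\partial\Omega}\to\kappa$ in $L^2(\partial\Omega)$, while $\int_\Omega c^{-2}u_1^k\,dx\to 0$; hence $0=\int_\Omega c^{-2}u_1^k\,dx+\int_{\partial\Omega}\beta u_0^k\,d\sigma\to\kappa\int_{\partial\Omega}\beta\,d\sigma$. Since $\beta\in C^2(\partial\Omega;(0,+\infty))$ is strictly positive, $\int_{\partial\Omega}\beta\,d\sigma>0$, forcing $\kappa=0$, a contradiction.

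It remains to track the dependence of the constant: the argument as stated is non-constructive, but the only ingredients used are the Rellich–Kondrachov embedding $H^1(\Omega)\hookrightarrow\hookrightarrow L^2(\Omega)$, the boundary trace inequality, the connectedness of $\Omega$, and the strict positivity $\min_{\partial\Omega}\beta>0$ together with $|\partial\Omega|$; none of these involve $c$ beyond the uniform two-sided bound $0<a\le c\le 1$, which was used only to identify $L^2(\Omega;c^{-2}dx)$ with $L^2(\Omega)$ with $a$-independent constants. Hence the resulting $C$ depends only on $\Omega$ and $\beta$, as claimed. The main obstacle is simply making sure the contradiction argument yields a constant independent of $a$: this is handled by noting that the seminorm on the right of \eqref{l1a} is already $a$-independent in its $\nabla u_0$ part and that $\norm{u_1}_{L^2(\Omega;c^{-2}dx)}$ appears identically on both sides, so the constant emerging from compactness never sees $a$.
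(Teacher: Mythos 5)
Your proof is correct, but it takes a genuinely different route from the paper. The paper argues constructively: it fixes extensions $\gamma\in C^1(\overline{\Omega};\R^3)$ of the normal $\nu$ and $\tilde{\beta}$ of $\beta$, uses Green's formula to rewrite $\int_{\partial\Omega}\beta u_0\,d\sigma$ as $\int_\Omega\tilde{\beta}\nabla u_0\cdot\gamma\,dx+\int_\Omega u_0\,\textrm{div}(\tilde{\beta}\gamma)\,dx$, notes that $\int_\Omega\textrm{div}(\tilde{\beta}\gamma)\,dx=\int_{\partial\Omega}\beta\,d\sigma>0$, and then applies the Poincar\'e--Wirtinger inequality to $u_0$ minus its $\textrm{div}(\tilde{\beta}\gamma)$-weighted mean, which is in turn controlled through the constraint \eqref{cond1} and Cauchy--Schwarz. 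This yields an explicit constant. Your compactness-and-contradiction argument exploits exactly the same structural facts --- the kernel of the seminorm consists of constants, and \eqref{cond1} together with $\int_{\partial\Omega}\beta\,d\sigma>0$ excludes nonzero constants --- but is non-constructive. The one point you should tighten is the uniformity in $c$: since the lemma's constant must not depend on $\omega$ or $a$, and the space $\mathcal H$ itself depends on $c$ through \eqref{cond1}, the negation of the statement produces a sequence of \emph{triples} $(c_k,u_0^k,u_1^k)$ with $(u_0^k,u_1^k)\in\mathcal H_{c_k}$, not a sequence in a single $\mathcal H$. The argument still closes because the only quantitative inputs are $1\leq c_k^{-2}\leq 4$ (from $a\in(\frac{1}{2},1)$), which give $\norm{u_1^k}_{L^2(\Omega)}\leq\norm{u_1^k}_{L^2(\Omega;c_k^{-2}dx)}$ and $\abs{\int_\Omega c_k^{-2}u_1^k\,dx}\leq 2\abs{\Omega}^{1/2}\norm{u_1^k}_{L^2(\Omega;c_k^{-2}dx)}\to0$; your closing paragraph gestures at this but the clean statement is to run the contradiction with varying $c_k$ from the start. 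In exchange for losing the explicit constant, your approach is shorter and avoids constructing the extensions $\gamma,\tilde{\beta}$; the paper's approach buys a quantitative constant and stays entirely elementary (no Rellich embedding).
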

\begin{proof} It is clear that the proof will be completed if we can show that 
\bel{l1b}\norm{u_0}_{L^2(\Omega)}^2\leq C\int_\Omega(|\nabla u_0|^2+| u_1|^2)dx.\ee
For this purpose, let us fix $\gamma\in C^1(\overline{\Omega};\R^3)$ and $\tilde{\beta}\in\ C^2(\overline{\Omega})$ satisfying $\gamma=\nu$ and $\tilde{\beta}=\beta$ on $\partial\Omega$. Then, applying the Green formula, we obtain
\bel{l1c}\begin{aligned}\int_{\partial\Omega}\beta u_0d\sigma(x)&=\int_\Omega \textrm{div}(\tilde{\beta} u_0\gamma)dx\\
\ &=\int_\Omega \tilde{\beta} \nabla u_0\cdot\gamma dx+\int_\Omega   u_0\textrm{div}(\tilde{\beta}\gamma) dx.\end{aligned}\ee
Using the fact that $\tilde{\beta}>0$, we deduce that 
$$\int_\Omega   \textrm{div}(\tilde{\beta}\gamma) dx=\int_{\partial\Omega}  \beta\gamma\cdot\nu d\sigma(x)=\int_{\partial\Omega}  \beta d\sigma(x)>0.$$
Therefore, applying the Poincar\'e-Wirtinger inequality, we deduce that
\bel{l1d}\norm{u_0- \frac{\int_\Omega   u_0\textrm{div}(\tilde{\beta}\gamma) dx}{\int_\Omega   \textrm{div}(\tilde{\beta}\gamma) dx}}_{L^2(\Omega)}^2\leq C\int_\Omega|\nabla u_0|^2dx,\ee
with $C>0$ depending only on $\beta$ and $\Omega$. On the other hand, combining \eqref{l1c} with \eqref{cond1}, using the fact that $\frac{1}{2}\leq a\leq c\leq1$ and applying the Cauchy-Schwarz inequality, we obtain
$$\abs{\int_\Omega   u_0\textrm{div}(\tilde{\beta}\gamma) dx}^2\leq 2\left(\abs{\int_\Omega \tilde{\beta} \nabla u_0\cdot\gamma dx}^2+\abs{\int_\Omega c^{-2}u_1dx}^2\right)\leq 
C\int_\Omega(|\nabla u_0|^2+| u_1|^2)dx,$$
with $C>0$ depending only on $\beta$ and $\Omega$. Combining this with \eqref{l1d}, we obtain
$$\norm{u_0}_{L^2(\Omega)}^2\leq  2C\int_\Omega|\nabla u_0|^2dx+2\abs{\frac{\int_\Omega   u_0\textrm{div}(\tilde{\beta}\gamma) dx}{\int_\Omega   \textrm{div}(\tilde{\beta}\gamma) dx}}^2\leq C\int_\Omega(|\nabla u_0|^2+| u_1|^2)dx,$$
with $C>0$ depending only on $\beta$ and $\Omega$. This proves \eqref{l1b} and it completes the proof of the lemma.
\end{proof}
According to Lemma \ref{l1}, one can check that the space $\mathcal H$ endowed with the scalar product 
$$\left\langle (u_0,u_1), (v_0,v_1) \right\rangle=\int_\Omega \nabla u_0\cdot \overline{\nabla v_0}dx+\int_\Omega c^{-2}u_1 \overline{v_1}dx$$
is an Hilbert space. Let us also consider the following density results on $\mathcal H$.

\begin{lem}\label{l11} Consider the space $\mathcal K$ defined by
$$\mathcal K:=\{(u_0,u_1)\in H^2(\Omega)\times H^1_0(\Omega):\ \partial_\nu u_0=0 \textrm{ on }\partial\Omega\}.$$
Then $\mathcal K\cap \mathcal H$ is dense on $\mathcal H$.

\end{lem}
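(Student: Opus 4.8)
The plan is to prove the density of $\mathcal K\cap\mathcal H$ in $\mathcal H$ by a two-step approximation argument: first approximate an arbitrary element of $\mathcal H$ by smoother pairs, then correct the correction so as to recover the linear constraint \eqref{cond1}. Concretely, fix $(u_0,u_1)\in\mathcal H$ and $\epsilon>0$. Since $C^\infty(\overline\Omega)$ is dense in $H^1(\Omega)$ and $C^\infty_0(\Omega)$ is dense in $L^2(\Omega)$, one can find $v_0\in H^2(\Omega)$ (indeed smooth) with $\|v_0-u_0\|_{H^1(\Omega)}<\epsilon$ and $w_1\in C^\infty_0(\Omega)\subset H^1_0(\Omega)$ with $\|w_1-u_1\|_{L^2(\Omega)}<\epsilon$. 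The pair $(v_0,w_1)$ is smooth enough, but in general fails two requirements: the Neumann condition $\partial_\nu v_0=0$ on $\partial\Omega$, and the compatibility relation \eqref{cond1}.

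Next I would fix the Neumann trace. Using a standard trace-lifting/extension argument, choose $\phi\in H^2(\Omega)$ supported in a collar neighbourhood of $\partial\Omega$ with $\phi=0$ and $\partial_\nu\phi=-\partial_\nu v_0$ on $\partial\Omega$, and with $\|\phi\|_{H^1(\Omega)}$ controlled by $\|\partial_\nu v_0\|_{H^{1/2}(\partial\Omega)}$; since $v_0$ can be taken arbitrarily $H^2$-close to $u_0$ and the latter is only $H^1$, one must be a little careful, but in fact one first picks $v_0$ smooth and then observes that the quantity $\|\partial_\nu v_0\|_{H^{1/2}(\partial\Omega)}$ is simply some finite number, so $\phi$ exists and one may further shrink $\epsilon$ afterwards; alternatively pick $v_0$ directly so that $\partial_\nu v_0=0$ on $\partial\Omega$ by mollifying in a boundary-adapted way. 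Replacing $v_0$ by $\tilde v_0:=v_0+\phi$ we get $\tilde v_0\in H^2(\Omega)$ with $\partial_\nu\tilde v_0=0$ on $\partial\Omega$ and $\|\tilde v_0-u_0\|_{H^1(\Omega)}$ still small (after adjusting $\epsilon$). The pair $(\tilde v_0,w_1)$ now lies in $\mathcal K$ but possibly violates \eqref{cond1}.

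Finally I would restore the constraint by a rank-one correction. Set
\[
\delta:=\int_\Omega c^{-2}w_1\,dx+\int_{\partial\Omega}\beta\tilde v_0\,d\sigma(x),
\]
which is small because $(u_0,u_1)$ satisfies \eqref{cond1} and $(\tilde v_0,w_1)$ is close to $(u_0,u_1)$ in $H^1\times L^2$ (the surface integral is controlled by the trace theorem). Pick once and for all a fixed $\chi\in C^\infty_0(\Omega)$ with $\int_\Omega c^{-2}\chi\,dx=1$ (possible since $c^{-2}\geq1>0$; note $\chi$ can be chosen independent of $a$ because $c^{-2}\geq 1$ regardless of $a$), and replace $w_1$ by $w_1':=w_1-\delta\chi$. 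Then $w_1'\in H^1_0(\Omega)$, the pair $(\tilde v_0,w_1')$ satisfies \eqref{cond1} exactly, hence belongs to $\mathcal K\cap\mathcal H$, and $\|w_1'-u_1\|_{L^2(\Omega)}\leq\|w_1-u_1\|_{L^2(\Omega)}+|\delta|\,\|\chi\|_{L^2(\Omega)}$ is as small as desired. This produces an element of $\mathcal K\cap\mathcal H$ within $C\epsilon$ of $(u_0,u_1)$ in the $\mathcal H$-norm, which is the $H^1\times L^2$ norm up to the equivalence $\tfrac12\leq c\leq1$, and the density follows.

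The only genuinely delicate point is the boundary correction in the middle step: one must arrange the Neumann condition $\partial_\nu\tilde v_0=0$ without spoiling the $H^1$-closeness to $u_0$, and the cleanest route is to perform the smoothing of $u_0$ from the start in a way adapted to the boundary (or to lift $\partial_\nu v_0$ by a function whose $H^1$-norm is small, which is possible precisely because one has the freedom to first make $\|v_0-u_0\|_{H^1}$ small and then absorb the resulting trace). Everything else — the density of smooth functions, the trace inequality bounding the surface integral, and the rank-one fix of \eqref{cond1} — is routine.
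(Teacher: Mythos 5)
Your overall strategy is sound and genuinely different from the paper's. The paper argues abstractly in two steps: the set $\{h\in H^2(\Omega):\ \partial_\nu h=0 \textrm{ on }\partial\Omega\}$ is the domain of the Neumann Laplacian $-\Delta_N$, hence dense in the form domain $H^1(\Omega)$ by the general theory of self-adjoint operators associated with closed sesquilinear forms; and $\mathcal H$ is the kernel of a continuous linear form on $H^1(\Omega)\times L^2(\Omega;c^{-2}dx)$, so intersecting a dense subspace with that kernel preserves density. Your rank-one correction with the fixed $\chi$ normalized by $\int_\Omega c^{-2}\chi\,dx=1$ is exactly the hands-on version of the paper's second step, and it is correct (the normalization depends on $c$, but no uniformity in $a$ is needed for a density statement, so this is harmless). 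What your explicit route buys is independence from the spectral-theoretic fact about form domains; what it costs is that you must build the Neumann-trace correction by hand, and that is where your write-up has a soft spot.

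Concretely, the middle step as justified does not hold up: choosing a lifting $\phi$ with $\|\phi\|_{H^1(\Omega)}$ ``controlled by $\|\partial_\nu v_0\|_{H^{1/2}(\partial\Omega)}$'' and then observing that this quantity ``is simply some finite number'' gives no smallness at all. Since $u_0$ is only $H^1$, the normal trace of its smooth approximant $v_0$ can be arbitrarily large, and ``shrinking $\epsilon$ afterwards'' replaces $v_0$ and hence the trace, so the argument is circular as stated; likewise the smallness of $\|v_0-u_0\|_{H^1}$ has nothing to do with the size of $\partial_\nu v_0$. The step is nevertheless true and salvageable by the standard thin-collar construction: in boundary normal coordinates set $\phi=-t\,\eta(t/\delta)\,\partial_\nu v_0(x')$ with $\eta(0)=1$ and $\eta$ supported near $0$; then $\phi\in H^2(\Omega)$, $\phi=0$ and $\partial_\nu\phi=-\partial_\nu v_0$ on $\partial\Omega$, and $\|\phi\|_{H^1(\Omega)}=O(\delta^{1/2})$ for the \emph{fixed smooth} datum $\partial_\nu v_0$, so it can be made as small as desired after $v_0$ has been chosen. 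With that substitution your proof closes; the paper's route via $D(-\Delta_N)$ simply avoids the trace correction altogether.
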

\begin{proof} Let us first show that the $\mathcal K$ is dense on $H^1(\Omega)\times L^2(\Omega;c^{-2}dx)$. Since $H^1_0(\Omega)$ is clearly dense in $L^2(\Omega;c^{-2}dx)$, we only need to prove that the space $\{h\in H^{2}(\Omega):\ \partial_\nu u_0=0 \textrm{ on }\partial\Omega\}$ is dense in $H^1(\Omega)$. For this purpose, consider $-\Delta_N$ the Laplacian with Neumann boundary condition acting on $L^2(\Omega)$ and recall that, since $\Omega$ is $\mathcal C^3$, $D(-\Delta_N)=\{h\in H^{2}(\Omega):\ \partial_\nu u_0=0 \textrm{ on }\partial\Omega\}$.  Then, recalling that $-\Delta_N$ is the unique selfadjoint operator associated with the sesquilinear hermitian form
$$b(u,v)=\int_\Omega \nabla u\cdot\nabla vdx,\quad u,v\in H^1(\Omega),$$
with domain $H^1(\Omega)$,  we deduce that $D(-\Delta_N)$ is dense in $H^1(\Omega)$. This proves that  $\mathcal K$ is dense on $H^1(\Omega)\times L^2(\Omega;c^{-2}dx)$. Now recalling that that $ \mathcal H$ is the kernel of a continuous linear form acting on $H^1(\Omega)\times L^2(\Omega;c^{-2}dx)$, one can check that $\mathcal K\cap \mathcal H$ is dense on $\mathcal H$.
\end{proof}

We denote by $A$ the operator 
$$A=\begin{pmatrix}
0 & 1\\
c^{2}\Delta & 0
\end{pmatrix}$$
acting on the space $\mathcal H$ with domain
$$D(A):=\{(u_0,u_1)\in\mathcal H:\ (u_1,c^{2}\Delta u_0)\in H^1(\Omega)\times L^2(\Omega;c^{-2}dx),\ \partial_\nu u_0+\beta u_1=0 \textrm{ on }\partial\Omega\}.$$
Let us observe that for all $(u_0,u_1)\in D(A)$ we have $A(u_0,u_1)\in\mathcal H$. Indeed, for all $(u_0,u_1)\in D(A)$, fixing $(v_0,v_1)=A(u_0,u_1)$, we obtain
$$\begin{aligned} \int_\Omega c^{-2}v_1dx+\int_{\partial\Omega}\beta v_0d\sigma(x)&=\int_\Omega \Delta u_0dx+\int_{\partial\Omega}\beta u_1d\sigma(x)\\
\ &=\int_\Omega \partial_\nu u_0d\sigma(x)+\int_{\partial\Omega}\beta u_1d\sigma(x)=0.\end{aligned}$$
This proves that  $(v_0,v_1)\in\mathcal H$. Using the properties described above  we will state our first  result of  existence of strong solution of \eqref{eq1} as follows.

\begin{prop}\label{p1} Let $(f,g)\in H^2(\Omega)\times H^1(\Omega)$ be such that the compatibility conditions \eqref{p1a}
are fulfilled. Then problem \eqref{eq1} admits a unique solution $p\in \mathcal C^2([0,+\infty);L^2(\Omega))\cap \mathcal C^1([0,+\infty);H^1(\Omega))\cap \mathcal C([0,+\infty);H^2(\Omega))$ and there exists $C>0$ depending only on $\beta$, $T$ and $\Omega$ such that
\bel{p1bb}\norm{p}_{\mathcal C^2([0,+\infty);L^2(\Omega))}+\norm{p}_{\mathcal C^1([0,+\infty);H^1(\Omega))}+\norm{p}_{\mathcal C([0,+\infty);H^2(\Omega))}\leq C\norm{(f,g)}_{H^2(\Omega)\times H^1(\Omega)}.\ee
\end{prop}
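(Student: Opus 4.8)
The strategy is to realize \eqref{eq1} as an abstract Cauchy problem $U'(t)=AU(t)$, $U(0)=(f,g)$, with $U=(p,\partial_t p)$, and to invoke the Lumer--Phillips / Hille--Yosida theory on the Hilbert space $\mathcal H$ equipped with the energy inner product introduced above. The first step is to check that $A$ (with the domain $D(A)$ specified) generates a strongly continuous semigroup. Here I would \emph{not} expect $A$ to be dissipative for the given inner product in the naive sense; rather I would show that $A-\lambda$ is maximal dissipative for a suitable real $\lambda$, i.e. estimate $\Pre{\langle AU,U\rangle_{\mathcal H}}$ using the Robin/damping boundary condition $\partial_\nu u_0+\beta u_1=0$ and the sign $\beta>0$. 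Concretely, for $U=(u_0,u_1)\in D(A)$,
\begin{equation}
\Pre{\langle AU,U\rangle_{\mathcal H}}=\Pre{\int_\Omega\nabla u_1\cdot\overline{\nabla u_0}\,dx+\int_\Omega(\Delta u_0)\overline{u_1}\,dx}=\Pre{\int_{\partial\Omega}(\partial_\nu u_0)\overline{u_1}\,d\sigma}=-\int_{\partial\Omega}\beta|u_1|^2\,d\sigma\leq0,
\end{equation}
so in fact $A$ itself is dissipative once the energy norm is used (the constant from Lemma \ref{l1} guarantees this norm is equivalent to the full $H^1\times L^2$ norm on $\mathcal H$, so no shift is needed). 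Then I would verify the range condition: for $\mu>0$ and $(h_0,h_1)\in\mathcal H$, solve $(\mu-A)(u_0,u_1)=(h_0,h_1)$, which reduces to the elliptic problem $\mu^2 u_0-c^2\Delta u_0=\mu h_0+h_1$ with the Robin condition $\partial_\nu u_0+\mu\beta u_0=\beta h_0$ on $\partial\Omega$; existence and uniqueness in $H^2(\Omega)$ follow from Lax--Milgram applied to the coercive form $\int_\Omega(\mu^2 c^{-2}u_0\overline v+\nabla u_0\cdot\overline{\nabla v})\,dx+\mu\int_{\partial\Omega}\beta u_0\overline v\,d\sigma$ on $H^1(\Omega)$, together with elliptic regularity (valid since $\Omega$ is $C^3$ — note the right-hand side here involves $c^2$, which is only $L^\infty$, so the $H^2$ regularity must be read off carefully; see the remark below). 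Checking $(u_0,u_1)$ lands in $D(A)$ and in particular in $\mathcal H$ uses the compatibility structure exactly as in the computation already displayed showing $A(D(A))\subset\mathcal H$. By Lumer--Phillips, $A$ generates a $C_0$-semigroup of contractions $(e^{tA})_{t\geq0}$ on $\mathcal H$, and the mild solution $U(t)=e^{tA}(f,g)$ is the desired $p$ with $\|U(t)\|_{\mathcal H}\leq\|(f,g)\|_{\mathcal H}$, giving $\mathcal C^1([0,\infty);H^1)\cap\mathcal C([0,\infty);H^1)$-type bounds at the energy level once $(f,g)\in\mathcal H$.

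The second step is the \emph{higher regularity} $p\in\mathcal C^2([0,\infty);L^2)\cap\mathcal C^1([0,\infty);H^1)\cap\mathcal C([0,\infty);H^2)$ with the quantitative bound \eqref{p1bb}. The standard route is: since $(f,g)\in D(A)$ (this is precisely what the compatibility conditions \eqref{p1a} plus $f\in H^2$, $g\in H^1$ encode — $\partial_\nu f+\beta g=0$ on $\partial\Omega$ gives the boundary condition, and the integral identity in \eqref{p1a} is the condition defining $\mathcal H$), the semigroup theory upgrades the mild solution to a strong one: $U\in\mathcal C^1([0,\infty);\mathcal H)\cap\mathcal C([0,\infty);D(A))$ with $\|U(t)\|_{D(A)}\leq\|(f,g)\|_{D(A)}$. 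Unwinding, $U\in\mathcal C([0,\infty);D(A))$ gives $p\in\mathcal C([0,\infty);H^2)$ provided one knows that the graph norm of $A$ controls the $H^2(\Omega)\times H^1(\Omega)$ norm on $D(A)$; this is where the bulk of the work sits. One must prove an elliptic estimate of the form $\|u_0\|_{H^2(\Omega)}\leq C(\|c^2\Delta u_0\|_{L^2}+\|u_0\|_{H^1})$ for $u_0$ with $\partial_\nu u_0+\beta u_1=0$; since $c^{-2}$ is bounded below and above, $\|c^2\Delta u_0\|_{L^2}\simeq\|\Delta u_0\|_{L^2}$, and standard $H^2$ regularity for the Neumann/Robin Laplacian on the $C^3$ domain $\Omega$ closes this, with $C$ depending only on $\beta,\Omega$ and \emph{not} on $a$ — the $a$-independence is exactly the point flagged in the text, and it works because the constant $c$ only multiplies $\Delta u_0$ and $\frac12\leq a\leq c\leq1$. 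Finally $p\in\mathcal C^1([0,\infty);H^1)$ comes from $\partial_t U\in\mathcal C([0,\infty);\mathcal H)$ (first component is $\partial_t p\in\mathcal C([0,\infty);H^1)$), and $p\in\mathcal C^2([0,\infty);L^2)$ from $\partial_t U=AU\in\mathcal C([0,\infty);D(A))\subset\mathcal C([0,\infty);\mathcal H)$ reading the second component $\partial_t^2 p=c^2\Delta p\in\mathcal C([0,\infty);L^2)$. Collecting these and using the equivalence of norms yields \eqref{p1bb}.

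I would expect \textbf{the main obstacle} to be the $a$-uniform elliptic regularity estimate making the graph norm of $A$ comparable to $\|\cdot\|_{H^2\times H^1}$ uniformly in $a\in(\tfrac12,1)$. The subtlety is that $A$ is \emph{not} the Robin Laplacian but $c^2\Delta$ with $c$ discontinuous across $\partial\omega$; however, because $c$ is a genuine constant on each of the two regions $\omega$ and $\Omega\setminus\overline\omega$ and because no transmission condition across $\partial\omega$ is imposed in $D(A)$ beyond $u_0\in H^2(\Omega)$ globally, the operator $u_0\mapsto c^2\Delta u_0$ is, up to the bounded multiplier $c^2$, the ordinary Laplacian, and interior/boundary $H^2$ regularity is governed solely by $\partial\Omega\in C^3$ and the Robin coefficient $\beta\in C^2$. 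One must nonetheless be careful that the solution of the resolvent equation is genuinely in $H^2(\Omega)$ and not merely piecewise $H^2$; this is fine here since $\Delta u_0=c^{-2}(\mu^2 u_0-\mu h_0-h_1)\in L^2(\Omega)$ and the Robin condition is on $\partial\Omega$ only, so global $H^2(\Omega)$ regularity follows from the scalar elliptic theory with the bound depending on $\beta,\Omega$ alone. A second, more routine, point is verifying the two compatibility conditions \eqref{p1a} are equivalent to $(f,g)\in D(A)$, and that Lemma \ref{l1} and Lemma \ref{l11} are what make $(\mathcal H,\langle\cdot,\cdot\rangle)$ a Hilbert space on which the density needed for the strong-solution bootstrap holds; these I would dispatch quickly by the computations already set up in the excerpt.
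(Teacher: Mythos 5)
Your plan is correct and follows essentially the same route as the paper: realize \eqref{eq1} as $U'=AU$ on the energy space $\mathcal H$, prove $A$ is maximal dissipative (your direct Lax--Milgram resolvent argument replaces the paper's closed-range-plus-density reduction, but both land in the same place), establish the continuous embedding $D(A)\hookrightarrow H^2(\Omega)\times H^1(\Omega)$ by the $a$-uniform elliptic estimate you correctly identify as the crux, and read off \eqref{p1bb} from $(f,g)\in D(A)$. The only point left implicit is uniqueness in the stated class $\mathcal C^2([0,+\infty);L^2(\Omega))\cap \mathcal C^1([0,+\infty);H^1(\Omega))\cap \mathcal C([0,+\infty);H^2(\Omega))$, which the paper settles by a short energy-decay argument; this is routine but should be recorded.
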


 \begin{proof} This lengthy proof will be divided into four steps.\\
\textbf{Step 1}: in this step we will show that $D(A)$ embedded continuously into $H^2(\Omega)\times H^1(\Omega)$. For this purpose, let us first observe that for $(u_0,u_1)\in D(A)$, we have $u_0,u_1\in H^1(\Omega)$, $\Delta u_0\in L^2(\Omega)$ and $\partial_\nu u_0dx+\beta u_1=0$ on 
$\partial\Omega$. It follows that $u_0\in H^1(\Omega)$, satisfies $\Delta u_0\in L^2(\Omega)$ and $\partial_\nu u_0=-\beta u_1\in H^{\frac{1}{2}}(\partial\Omega)$ and applying \cite[Theorem 2.4.1.3]{Gr} we deduce that $u_0\in H^2(\Omega)$ and, in view of Lemma \ref{l1}, we obtain
$$\begin{aligned}\norm{u_0}_{H^2(\Omega)}+\norm{u_1}_{H^1(\Omega)}&\leq C\left(\norm{\Delta u_0}_{L^2(\Omega)}+\norm{u_0}_{H^1(\Omega)}+\norm{u_1}_{H^1(\Omega)}+\norm{\partial_\nu u_0}_{H^{\frac{1}{2}}(\partial\Omega)}\right)\\
\ &\leq C \left(\norm{c^2\Delta u_0}_{L^2(\Omega;c^{-2}dx)}+\norm{u_0}_{H^1(\Omega)}+\norm{u_1}_{H^1(\Omega)}+\norm{\beta u_1}_{H^{\frac{1}{2}}(\partial\Omega)}\right)\\
\ &\leq C (\norm{c^2\Delta u_0}_{L^2(\Omega;c^{-2}dx)}+\norm{u_0}_{H^1(\Omega)}+\norm{u_1}_{H^1(\Omega)})\\
\ &\leq C(\norm{A(u_0,u_1)}_{\mathcal H}+\norm{(u_0,u_1)}_{\mathcal H})\\
\ &\leq C\norm{(u_0,u_1)}_{D(A)},\end{aligned}$$
with $C>0$ depending only on  $\beta$ and $\Omega$. Here we have used the fact that $\frac{1}{2}\leq a\leq c\leq 1$.
This proves that $D(A)$ embedded continuously into $H^2(\Omega)\times H^1(\Omega)$ and we deduce that
$$D(A):=\{(u_0,u_1)\in\mathcal H\cap H^2(\Omega)\times H^1(\Omega): \partial_\nu u_0+\beta u_1=0 \textrm{ on }\partial\Omega\}.$$

\textbf{Step 2}: In this step we will prove that the operator $A$ generates a semigroup $e^{tA}$.
Recalling that the space $\mathcal K\cap\mathcal H$, with $\mathcal K$ defined in Lemma \ref{l11}, embedded into $D(A)$ and, in view of Lemma \ref{l11}, we deduce that $D(A)$ is dense in $\mathcal H$.  Fix $(u_0,u_1)\in D(A)$ and notice that, integrating by parts, we obtain
\bel{p1b}\begin{aligned}\re\left\langle A(u_0,u_1), (u_0,u_1) \right\rangle&=\re\left(\int_\Omega \nabla u_1\cdot \overline{\nabla u_0}dx+\int_\Omega \Delta u_0 \overline{u_1}dx\right)\\
\ &=\re\left(\int_{\partial\Omega}\partial_\nu u_0 \overline{u_1}d\sigma(x)\right)=-\int_{\partial\Omega}\beta(x)^{-1}|\partial_\nu u_0|^2d\sigma(x)\leq0.\end{aligned}\ee
Thus, applying \cite[Proposition 2.4.2]{CH}, we deduce that $A$ is dissipative. Now let us prove that $A$ is maximal dissipative. For this purpose, according to \cite[Proposition 2.2.6]{CH}, it is enough to show that for any $F=(F_0,F_1)\in\mathcal H$ the equation $Av-v=F$ admits a  solution $v=(v_0,v_1)\in D(A)$. Let us first prove that Ran$(A-Id)$ is a closed subspace of $\mathcal H$. For this purpose, let us consider a sequence $(g_n)_{n\in\mathbb N}$ of Ran$(A-Id)$ that converges in the sense of $\mathcal H$ to $g$. Then, there exists a sequence $(h_n)_{n\in\mathbb N}$ of $D(A)$ such that $g_n=(A-Id)h_n$, $n\in\mathbb N$. Then, for all $m,n\in\mathbb N$, we have
$$\begin{aligned}\norm{g_n-g_m}_{\mathcal H}^2&=\norm{(A-Id)h_n-(A-Id)h_m}_{\mathcal H}^2\\
\ &=\norm{h_n-h_m}_{\mathcal H}^2+\norm{Ah_n-Ah_m}_{\mathcal H}^2-2\re \left\langle h_n-h_m,A(h_n-h_m)\right\rangle.\end{aligned}$$
Combining this with \eqref{p1b}, we deduce that
$$\norm{h_n-h_m}_{\mathcal H}^2+\norm{A(h_n-h_m)}_{\mathcal H}^2\leq \norm{g_n-g_m}_{\mathcal H}^2,\quad m,n\in\mathbb N,$$
which implies that $(h_n)_{n\in\mathbb N}$ is a Cauchy sequence  of $D(A)$. Combining this with the fact that $D(A)$ embedded continuously into $H^2(\Omega)\times H^1(\Omega)$,  we deduce that there exists $h\in D(A)$ such that $(h_n)_{n\in\mathbb N}$ converges to $h$ in the sense of $D(A)$ and we have $g=(A-Id)h$ which proves that Ran$(A-Id)$ is closed. Using this property and applying a density argument, it would be enough to show that for any $F=(F_0,F_1)\in\mathcal H\cap H^2(\Omega)^2$ the equation $Av-v=F$ admits a  solution $v=(v_0,v_1)\in D(A)$. For this purpose, let us observe that the equation $Av-v=F$ can be rewritten as
\begin{equation}\label{p1c}\left\{\begin{array}{ll}c^2\Delta v_0-v_0=F_0+F_1,\quad &\textrm{in}\ \Omega,\\  
v_1=F_0+v_0,\quad &\textrm{in}\ \Omega,\\ 
\partial_\nu v_0+\beta v_0=-\beta F_0,\quad &\textrm{on}\ \partial\Omega.\end{array}\right.\end{equation}
Using the fact that  $\beta\in \mathcal C^2(\partial\Omega;(0,+\infty))$, we deduce that for any $F=(F_0,F_1)\in\mathcal H\cap H^2(\Omega)^2$ the problem \eqref{p1c}, which is an elliptic equation with Robin boundary condition with respect to $v_0$,  admits a solution $v_0,v_1\in H^2(\Omega)$ satisfying $\partial_\nu v_0+\beta v_1=0$ on $\partial\Omega$. Moreover, using the fact that $F=(F_0,F_1)\in\mathcal H\cap H^2(\Omega)^2$, we obtain
$$\begin{aligned}\int_{\partial\Omega}\beta v_0d\sigma(x)&=\int_{\partial\Omega}\beta v_1d\sigma(x)-\int_{\partial\Omega}\beta F_0d\sigma(x)\\
\ &=\int_{\partial\Omega}\beta v_1d\sigma(x)+\int_{\Omega}c^{-2} F_1dx\\
\ &=\int_{\partial\Omega}\beta v_1d\sigma(x)+\int_{\Omega}\Delta v_0dx-\int_{\Omega}c^{-2}(v_0+F_0)dx\\
\ &=\int_{\partial\Omega}(\partial_\nu v_0+\beta v_1)d\sigma(x)-\int_{\Omega}c^{-2}v_1dx\\
\ &=-\int_{\Omega}c^{-2}v_1dx.\end{aligned}$$
This proves that $(v_0,v_1)\in \mathcal H\cap H^2(\Omega)\times H^1(\Omega)$ and $\partial_\nu v_0+\beta v_1=0$ on $\partial\Omega$, which implies that $(v_0,v_1)\in D(A)$. Therefore, $A$ is maximal dissipative with a domain dense in $\mathcal H$ and applying the Hille-Yoshida theorem (see e.g. \cite[Theorem 3.1.1.]{CH}) we deduce that it generates a semigroup $e^{tA}$. 

\textbf{Step 3:} In this step we complete the proof of existence of solutions of \eqref{eq1}. In view of \eqref{p1a} we have $(f,g)\in D(A)$ and, applying \cite[Theorem 3.1.1.]{CH}, we deduce the map $t\mapsto e^{tA}(f,g)\in \mathcal C([0,+\infty);D(A))\cap \mathcal C^1([0,+\infty);\mathcal H)$. Combining this with the continuous embedding $D(A)\subset H^2(\Omega)\times H^1(\Omega)$, we deduce that  problem \eqref{eq1} admits a  solution $p\in \mathcal C^2([0,+\infty);L^2(\Omega))\cap \mathcal C^1([0,+\infty);H^1(\Omega))\cap \mathcal C([0,+\infty);H^2(\Omega))$ given by $(p(t),\partial_tp(t))=e^{tA}(f,g)$. Moreover, applying \cite[Theorem 3.1.1]{CH} and the fact that $D(A)$ embedded continuously into $H^2(\Omega)\times H^1(\Omega)$, we deduce that there exists $C>0$ depending only on  $\beta$, $T$ and $\Omega$ such that
$$\begin{aligned}\norm{p}_{\mathcal C^2([0,+\infty);L^2(\Omega))}+\norm{p}_{\mathcal C^1([0,+\infty);H^1(\Omega))}+\norm{p}_{\mathcal C([0,+\infty);H^2(\Omega))}&\leq C\norm{(p,\partial_tp)}_{\mathcal C([0,+\infty);D(A))}\\
&\leq C\norm{(f,g)}_{D(A)}\\
&\leq  C\norm{(f,g)}_{H^2(\Omega)\cap H^1(\Omega)}.\end{aligned}$$
From this estimate we obtain \eqref{p1bb}.

\textbf{Step 4:} In this step, we prove the uniqueness of solutions of \eqref{eq1}. For this purpose, let us consider the energy $E$ defined by
\bel{energy}E(t)=\int_\Omega c^{-2}|\partial_tp(\cdot,t)|^2+|\nabla p(\cdot,t)|^2dx,\quad t\in[0,+\infty).\ee
Assuming that $p\in \mathcal C^2([0,+\infty);L^2(\Omega))\cap \mathcal C^1([0,+\infty);H^1(\Omega))\cap \mathcal C([0,+\infty);H^2(\Omega))$ solves \eqref{eq1} with $f=g\equiv0$, we deduce that $E\in \mathcal C^1([0,+\infty))$ and, integrating by parts, we get
$$\begin{aligned}E'(t)&=2\re \int_\Omega(c^{-2}\partial_t^2p\overline{\partial_tp}+\nabla p\cdot\nabla\overline{\partial_t p})dx\\
\ &=2\re \int_\Omega(c^{-2}\partial_t^2p-\Delta p)\overline{\partial_tp}dx+2\re\int_{\partial\Omega}\partial_\nu p\overline{\partial_t p}d\sigma(x)\\
\ &=-2\int_{\partial\Omega}\beta|\partial_t p|^2d\sigma(x)\leq0.\end{aligned}$$
Therefore, the map $E$ is non-increasing and, using the fact that $E(0)=0$, we deduce that $E\equiv0$. Thus, $p$ is a constant function and using the fact that $p(\cdot,0)\equiv0$ we deduce that $p\equiv0$. This completes the proof of the uniqueness of solutions $p\in \mathcal C^2([0,+\infty);L^2(\Omega))\cap \mathcal C^1([0,+\infty);H^1(\Omega))\cap \mathcal C([0,+\infty);H^2(\Omega))$ of \eqref{eq1} and it completes the proof of the proposition.

\end{proof}

We can extend this result as follows.
\begin{prop}\label{p2} Let $(f,g)\in H^2(\Omega)\times H^1(\Omega)$ be such that the following condition 

\bel{p2a} \partial_\nu f+\beta g=0\textrm{ on }\partial\Omega,
\ee
is fulfilled. Then problem \eqref{eq1} admits a unique solution $p\in C^2([0,+\infty);L^2(\Omega))\cap C^1([0,+\infty);H^1(\Omega))\cap C([0,+\infty);H^2(\Omega))$. Moreover, there exists  a constant $C>0$ depending on  $\beta$, $\Omega$ such that \eqref{p1bb} is fulfilled.
\end{prop}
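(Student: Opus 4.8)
The plan is to reduce to Proposition \ref{p1} by subtracting off a constant. The only obstruction is that $(f,g)$ need not lie in $\mathcal H$: the boundary condition \eqref{p2a} is exactly the second half of \eqref{p1a}, but the integral identity \eqref{cond1} may fail. I would set
\[
m:=\int_\Omega c^{-2}g\,dx+\int_{\partial\Omega}\beta f\,d\sigma(x),\qquad \kappa:=\frac{m}{\int_{\partial\Omega}\beta\,d\sigma(x)},
\]
which is well defined since $\beta\in C^2(\partial\Omega;(0,+\infty))$ forces $\int_{\partial\Omega}\beta\,d\sigma(x)>0$. By the trace theorem and the Cauchy--Schwarz inequality, $|m|\leq C\|(f,g)\|_{H^2(\Omega)\times H^1(\Omega)}$, hence $|\kappa|\leq C\|(f,g)\|_{H^2(\Omega)\times H^1(\Omega)}$ with $C$ depending only on $\beta$ and $\Omega$.

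First I would observe that the constant function $p_0\equiv\kappa$ solves the PDE in \eqref{eq1} (being harmonic in $x$ and constant in $t$), satisfies the boundary condition $\partial_\nu p_0+\beta\partial_t p_0=0$, and has initial data $(p_0(0,\cdot),\partial_t p_0(0,\cdot))=(\kappa,0)$. Next I would check that $(f-\kappa,g)$ satisfies \emph{both} compatibility conditions in \eqref{p1a}: the boundary identity is unchanged because $\kappa$ is constant, while
\[
\int_\Omega c^{-2}g\,dx+\int_{\partial\Omega}\beta(f-\kappa)\,d\sigma(x)=m-\kappa\int_{\partial\Omega}\beta\,d\sigma(x)=0 .
\]
Proposition \ref{p1} then provides a unique $\tilde p\in \mathcal C^2([0,+\infty);L^2(\Omega))\cap \mathcal C^1([0,+\infty);H^1(\Omega))\cap \mathcal C([0,+\infty);H^2(\Omega))$ solving \eqref{eq1} with data $(f-\kappa,g)$, together with the bound \eqref{p1bb} for $\tilde p$ in terms of $\|(f-\kappa,g)\|_{H^2(\Omega)\times H^1(\Omega)}$. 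Setting $p:=\tilde p+\kappa$ gives a solution of \eqref{eq1} with the original data $(f,g)$ in the required regularity class, and combining the estimate for $\tilde p$ with $|\kappa|\leq C\|(f,g)\|_{H^2(\Omega)\times H^1(\Omega)}$ yields \eqref{p1bb}.

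It remains to prove uniqueness, and here the argument of Step 4 in the proof of Proposition \ref{p1} applies verbatim: if $p$ is a solution in the stated class with $f=g\equiv 0$, then the energy $E$ in \eqref{energy} belongs to $\mathcal C^1([0,+\infty))$, satisfies $E'(t)=-2\int_{\partial\Omega}\beta|\partial_t p|^2\,d\sigma(x)\leq 0$ and $E(0)=0$, so $E\equiv 0$; thus $\nabla p\equiv 0$ and $\partial_t p\equiv 0$, whence $p$ is constant in space and time and equals $p(\cdot,0)=0$. Since this argument never used the integral part of \eqref{p1a}, it covers the present setting unchanged. The only mildly delicate points are bookkeeping ones --- verifying that adding the constant $\kappa$ stays within the regularity class (immediate) and that every constant depends only on $\beta$ and $\Omega$ --- so I do not expect a genuine obstacle.
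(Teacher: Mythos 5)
Your proposal is correct and follows essentially the same route as the paper: the paper's proof also subtracts the constant $\lambda=\bigl(\int_\Omega c^{-2}g\,dx+\int_{\partial\Omega}\beta f\,d\sigma(x)\bigr)/\int_{\partial\Omega}\beta\,d\sigma(x)$, observes that $(f-\lambda,g)\in D(A)$ and that constants solve \eqref{eq1}, and writes $p=\lambda+v$ with $v$ given by Proposition \ref{p1}. Your additional bookkeeping (the bound on $\kappa$ and the explicit re-run of the uniqueness argument) is consistent with what the paper delegates back to Proposition \ref{p1}.
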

\begin{proof} Let us consider $\lambda\in\mathbb C$, defined by
$$\lambda= \frac{\int_\Omega c^{-2}gdx+\int_{\partial\Omega}\beta fd\sigma(x)}{\int_{\partial\Omega}\beta d\sigma(x)}.$$
It is clear that $(f-\lambda,g)\in \mathcal H$ and we deduce that $(f-\lambda,g)\in D(A)$. Therefore, noticing that constant functions solve \eqref{eq1}, with $f$ constant and $g\equiv0$, we deduce that $p=\lambda +v$, with $v\in C^2([0,+\infty);L^2(\Omega))\cap C^1([0,+\infty);H^1(\Omega))\cap C([0,+\infty);H^2(\Omega))$ given by $(v(t),\partial_tv(t))=e^{tA}(f-\lambda,g)$, solves \eqref{eq1} and $p\in C^2([0,+\infty);L^2(\Omega))\cap C^1([0,+\infty);H^1(\Omega))\cap C([0,+\infty);H^2(\Omega))$. For the estimate \eqref{p1bb} and the uniqueness of such solutions of \eqref{eq1} we refer to Proposition \ref{p1}.

\end{proof}

\section{Observability}
\label{section-observ}

In this section, we consider $\omega$ a star-shaped domain and we recall that there exists $x_0\in \omega$ such that for all $x\in \omega$ the interval $[x_0,x]$ is contained into $\omega$. As a consequence of this result, fixing $\bm{n}$ the outward unit normal vector of $\omega$, we find
\bel{bord1}\bm{n}\cdot (x-x_0)\geq0,\quad x\in\partial \omega.\ee
We define also the following constant
\bel{cste}C(x_0)=\sup_{x\in\Omega}|x-x_0|.\ee
Now let us consider the solution of the following initial boundary value problem
\begin{equation}\label{eq2}\left\{\begin{array}{ll}c^{-2}\partial_t^2u-\Delta_x u=F(t,x),\quad &\textrm{in}\ (0,T)\times\Omega,\\  u(0,\cdot)=u_0,\quad \partial_tu(0,\cdot)=u_1,\quad &\textrm{in}\ \Omega,\\  u=0,\quad &\textrm{on}\ (0,T)\times\partial\Omega.\end{array}\right.\end{equation}
with $T>0$, $u_0\in H^1_0(\Omega)$, $u_1\in L^2(\Omega;c^{-2}dx)$, $F\in L^2((0,T)\times\Omega)$ and $c$ of the form \eqref{speed}. Following the argumentation of \cite[Lemma 2.34 and Lemma 2.39]{KKL}, we can prove that \eqref{eq2} admits a unique solution $u\in C([0,T];H^1_0(\Omega))\cap C^1([0,T];L^2(\Omega))$ satisfying $\partial_\nu u|_{(0,T)\times\partial\Omega}\in L^2((0,T)\times\partial\Omega))$. We prove the following observability inequality which is an extension of some known results  to our class of equations with non-smooth coefficients \cite{Zuazua, ACT, ACT2}.

\begin{prop}\label{p3} Let $(u_0,u_1)\in H^1_0(\Omega)\times L^2(\Omega)$ and $T>2C(x_0)a^{-2}$ with $C(x_0)$ defined by \eqref{cste}. Then, for $u$ the solution of \eqref{eq2}, we have 
\bel{p3a} \int_\Omega [|u_1|^2+c^2|\nabla u_0|^2]dx\leq \frac{2C(x_0)}{Ta^2-2C(x_0)}\left(\int_0^T\int_{\partial\Omega}|\partial_\nu u(t,x)|^2d\sigma(x)dt +\int_0^T\int_{\Omega}|F(t,x)|^2dxdt\right)
\ee
is fulfilled. 
\end{prop}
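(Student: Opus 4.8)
The plan is to prove the observability inequality \eqref{p3a} by the classical multiplier method, adapted to the piecewise constant speed $c=1+(a-1)\mathds{1}_\omega$. The natural multiplier is $m(x)\cdot\nabla u$ together with a zeroth order correction, where $m(x)=x-x_0$ and $x_0\in\omega$ is the star-center from \eqref{bord1}. Since the coefficient $c^{-2}$ jumps across $\partial\omega$, I would first establish the relevant energy identity on each of the two regions $\omega$ and $\Omega\setminus\overline{\omega}$ separately (after a density argument allowing us to work with smooth enough solutions, e.g. $(u_0,u_1)\in D(A_{\mathrm{Dir}})$ and $F$ smooth), and then add the contributions, tracking the interface terms on $\partial\omega$.

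First I would introduce the energy $E(t)=\tfrac12\int_\Omega\big(c^{-2}|\partial_tu|^2+|\nabla u|^2\big)dx$ and record that, because $u=0$ on $\partial\Omega$, $E'(t)=\int_\Omega F\,c^{-2}\partial_tu\,\cdot$ wait, more precisely $E'(t)=\re\int_\Omega c^{-2}F\,\overline{\partial_t u}\,dx$, so that $|E'(t)|\le \tfrac12\int_\Omega c^{-2}|\partial_t u|^2 + \tfrac12\int_\Omega c^{-2}|F|^2$; in particular $E$ stays comparable to $E(0)$ up to the $L^2$ norm of $F$ on a bounded time interval. Next, multiply the equation by $m\cdot\nabla\overline{u}$ and by a constant times $\overline u$, integrate over $(0,T)\times\Omega$, and integrate by parts in space and time. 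The key identity has the structure
\begin{equation*}
\Big[\re\!\int_\Omega c^{-2}\partial_t u\,(m\cdot\nabla\overline u+\tfrac{n-1}{2}\overline u)\,dx\Big]_0^T + \int_0^T\!\!\int_\Omega\Big(\text{bulk terms}\Big) + \int_0^T\!\!\int_{\partial\Omega}\tfrac12(m\cdot\nu)|\partial_\nu u|^2 + \int_0^T\!\!\int_{\partial\omega}(\text{interface terms}) = \int_0^T\!\!\int_\Omega F(\cdots).
\end{equation*}
On each subregion the standard Rellich computation turns the bulk gradient terms into a multiple of $\int|\nabla u|^2$ (with $n=3$ the first-order $\overline u$ correction is chosen to cancel the divergence term), and $\int c^{-2}\partial_t^2 u\,(m\cdot\nabla\overline u)$ produces $-\tfrac12\int \p_t(c^{-2})\cdots$ — which vanishes since $c$ is time-independent — plus a $\int c^{-2}|\partial_t u|^2$ term after integrating by parts in $t$. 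Summing the two regions, the interface contributions combine: the jump of $c^{-2}$ across $\partial\omega$ appears weighted by $m\cdot\bm n = (x-x_0)\cdot\bm n\ge 0$ from \eqref{bord1}, while the $|\nabla u|^2$ tangential/normal parts recombine using continuity of $u$ and of $\partial_\nu u$ across $\partial\omega$ so that the interface term has a favorable sign (this is exactly where star-shapedness of $\omega$ enters, exactly as $m\cdot\nu\ge 0$ on $\partial\Omega$ would be used if $\Omega$ were star-shaped — here we do not need that, only the boundary term $\tfrac12\int(m\cdot\nu)|\partial_\nu u|^2$ which we bound above by $\tfrac12 C(x_0)\int|\partial_\nu u|^2$).

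Assembling these pieces gives, after using $E(t)\ge E(0)-\tfrac12\int_0^t\!\int c^{-2}|F|^2 - \tfrac12\int_0^t\!\int c^{-2}|\partial_t u|^2$ and the conservation-type bound above, an inequality of the form $(T - \text{const}\cdot C(x_0))\,E(0) \le \text{const}\cdot C(x_0)\big(\int_0^T\!\int_{\partial\Omega}|\partial_\nu u|^2 + \int_0^T\!\int_\Omega|F|^2\big)$, and then one optimizes the constants. The bound $\tfrac12\le a\le c\le 1$ is used to convert $c^{-2}$-weighted energies to plain ones and to make the threshold explicit: the "lost" time is controlled by $C(x_0)$ and by $a^{-2}$ (coming from $c^{-2}\le a^{-2}$ in the bound of the boundary multiplier term versus the energy), which reproduces the stated hypothesis $T>2C(x_0)a^{-2}$ and the constant $\tfrac{2C(x_0)}{Ta^2-2C(x_0)}$. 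Finally, I would pass from $E(0)=\tfrac12\int_\Omega(c^{-2}|u_1|^2+|\nabla u_0|^2)\,dx$ to the quantity $\int_\Omega(|u_1|^2+c^2|\nabla u_0|^2)\,dx$ appearing in \eqref{p3a}; since $c^2\le 1$ and $c^{-2}\ge 1$ this passage is immediate (in fact $\int(|u_1|^2+c^2|\nabla u_0|^2)\le \int(c^{-2}|u_1|^2+|\nabla u_0|^2)=2E(0)$), and a density argument extends the estimate from smooth data to general $(u_0,u_1)\in H^1_0(\Omega)\times L^2(\Omega)$ and $F\in L^2((0,T)\times\Omega)$ using the hidden regularity $\partial_\nu u\in L^2((0,T)\times\partial\Omega)$ recalled before the statement.

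The main obstacle I expect is the careful bookkeeping of the interface terms on $\partial\omega$: one must verify that, after splitting $\nabla u$ into normal and tangential components on $\partial\omega$ and using the transmission conditions (continuity of $u$, hence of its tangential gradient, and continuity of the conormal derivative $\partial_{\bm n}u$ since the principal part is $-\Delta$ on both sides), the sum of the two one-sided Rellich boundary integrals has the correct sign thanks to $(x-x_0)\cdot\bm n\ge0$ and $c\le 1$ inside $\omega$. This is the genuinely new point compared to the smooth-coefficient literature \cite{Zuazua, ACT, ACT2}, and it is what forces the star-shapedness assumption on $\omega$; everything else is a now-standard multiplier computation.
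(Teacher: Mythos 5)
Your overall strategy---the multiplier $m\cdot\nabla u$ with $m=x-x_0$, a zeroth-order correction, splitting the integrations over $\omega$ and $\Omega\setminus\overline{\omega}$, and invoking $(x-x_0)\cdot\bm{n}\geq0$ to handle the interface---is the same as the paper's, and the peripheral steps (density, hidden regularity, the energy bookkeeping with $F\neq0$, the passage from $E(0)$ to $\int_\Omega(|u_1|^2+c^2|\nabla u_0|^2)dx$) are fine. The gap is at the one point you yourself flag as the crux: you assert, but do not verify, that the interface term on $\partial\omega$ has a favorable sign, and in the normalization you chose it does not. Since the equation is $c^{-2}\partial_t^2u-\Delta u=F$, the transmission conditions are continuity of $u$ and of $\partial_{\bm{n}}u$ across $\partial\omega$, so the \emph{full} gradient $\nabla u$ (tangential and normal parts alike) is continuous there; consequently the two one-sided Rellich boundary integrals coming from $-\int\Delta u\,(m\cdot\nabla u)$ cancel exactly and the Laplacian contributes \emph{no} interface term at all. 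The only interface contribution comes from $\int c^{-2}\partial_t^2u\,(m\cdot\nabla u)$, namely $-\tfrac12(a^{-2}-1)\int_0^T\int_{\partial\omega}|\partial_tu|^2\,(m\cdot\bm{n})\,d\sigma\,dt$. When you rearrange the identity to isolate $\int_0^TE(t)\,dt$ on the left, this term moves to the right-hand side as $+\tfrac12(a^{-2}-1)\int_0^T\int_{\partial\omega}|\partial_tu|^2(m\cdot\bm{n})\,d\sigma\,dt\geq0$ (recall $a<1$, so $a^{-2}-1>0$): it sits next to the boundary observation term with the \emph{unfavorable} sign, and it can neither be dropped nor absorbed, since a trace of $\partial_tu$ on an interior hypersurface is not controlled by the energy.

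The paper avoids precisely this by first multiplying the equation through by $c^2$, i.e.\ working with $\partial_t^2u-c^2\Delta u$ and the unweighted multiplier $\nabla u\cdot(x-x_0)$: then the time-derivative term carries no weight (hence no interface contribution), and the interface term instead arises from the $c^2\Delta u$ part with the factor $a^2-1<0$, which in the paper's computation lands with the sign that allows it to be discarded. So the choice of normalization is not cosmetic here; it decides which quantity appears on $\partial\omega$ ($|\partial_tu|^2$ weighted by $a^{-2}-1>0$, versus a gradient expression weighted by $a^2-1<0$) and on which side of the inequality it falls. To repair your argument you should adopt the paper's normalization---equivalently, use the weighted multiplier $c^2(m\cdot\nabla u)$---and then carry out the $\partial\omega$ bookkeeping explicitly, decomposing $\nabla u$ into $\partial_{\bm{n}}u\,\bm{n}+\nabla_Tu$ and checking the sign of the resulting one-sided expressions, rather than asserting it.
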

\begin{proof}
By a density argument, it would be enough to prove the result for $u_0,u_1\in \mathcal C^\infty_0(\Omega)$.  With such assumptions it is known that \eqref{eq2} admits a unique solution $u\in \mathcal C^2([0,+\infty);L^2(\Omega))\cap \mathcal C^1([0,+\infty);H^1_0(\Omega))\cap \mathcal C([0,+\infty);H^2(\Omega))$. Without loss of generality we assume also that $u_0,u_1$ are real valued in such way that $u$ is also real valued.
Fixing $x_0\in \omega$ satisfying \eqref{bord1} and multiplying \eqref{eq2} by $\nabla u\cdot(x-x_0)$, we obtain
\bel{p3b}\begin{aligned}\int_0^T\int_\Omega F\nabla u\cdot(x-x_0)dxdt&=\int_0^T\int_\Omega [\partial_t^2u\nabla u\cdot(x-x_0) -c^2\Delta u\nabla u\cdot(x-x_0)]dxdt\\
\ &=\int_0^T\int_\Omega [\partial_t^2u\nabla u\cdot(x-x_0)dxdt-\int_0^T\int_\Omega  c^2\Delta u\nabla u\cdot(x-x_0)dxdt=I_1-I_2.\end{aligned}\ee
From now on we denote by $V$ the vector valued function $V(x)=x-x_0$. Integrating by parts, we obtain
$$\begin{aligned}I_1&=-\int_0^T\int_\Omega \partial_tu\nabla \partial_tu\cdot Vdxdt+\int_\Omega \partial_tu(T,x)\nabla u(T,x)\cdot Vdx-\int_\Omega u_1\nabla u_0\cdot Vdx\\
\ &=-\frac{1}{2}\int_0^T\int_\Omega [\textrm{div}(|\partial_tu|^2 V)-|\partial_tu|^2\textrm{div}(V)]dxdt+\int_\Omega \partial_tu(T,x)\nabla u(T,x)\cdot Vdx-\int_\Omega u_1\nabla u_0\cdot Vdx\\
\ &=-\frac{1}{2}\int_0^T\int_{\partial\Omega} |\partial_tu|^2 V\cdot\nu d\sigma(x)dt+\frac{3}{2}\int_0^T\int_\Omega |\partial_tu|^2dxdt+\int_\Omega \partial_tu(T,x)\nabla u(T,x)\cdot Vdx-\int_\Omega u_1\nabla u_0\cdot Vdx\\
\ &=\frac{3}{2}\int_0^T\int_\Omega |\partial_tu|^2dxdt+\int_\Omega \partial_tu(T,x)\nabla u(T,x)\cdot Vdx-\int_\Omega u_1\nabla u_0\cdot Vdx.\end{aligned}$$
In the same way, we find
$$\begin{aligned}I_2&=\int_0^T\int_{\Omega\setminus \overline{\omega}}  \Delta u\nabla u\cdot Vdxdt+a^2\int_0^T\int_{\omega}  \Delta u\nabla u\cdot Vdxdt\\
\ &=\frac{1}{2}\int_0^T\int_{\Omega\setminus \overline{\omega}}  [\textrm{div}( |\nabla u|^2 V)-|\nabla u|^2\textrm{div}(V)]dxdt+\frac{a^2}{2}\int_0^T\int_{\omega}  [\textrm{div}( |\nabla u|^2 V)-|\nabla u|^2\textrm{div}(V)] dxdt\\
\ &=\frac{1}{2}\int_0^T\int_{\Omega\setminus \overline{\omega}}  \textrm{div}( |\nabla u|^2 V)dxdt-\frac{3}{2}\int_0^T\int_{\Omega\setminus \overline{\omega}}  |\nabla u|^2 dxdt+\frac{a^2}{2}\int_0^T\int_{\omega}  \textrm{div}( |\nabla u|^2 V)dxdt-\frac{3a^2}{2}\int_0^T\int_{\omega}  |\nabla u|^2 dxdt\\
\ &=\frac{(a^2-1)}{2}\int_0^T\int_{\partial \omega} |\nabla u|^2 V\cdot \bm{n} d\sigma(x)dt+\frac{1}{2}\int_0^T\int_{\partial \Omega} |\nabla u|^2 V\cdot \nu d\sigma(x)dt-\frac{3}{2}\int_0^T\int_{\Omega}  c^2|\nabla u|^2 dxdt\\
\ &=\frac{(a^2-1)}{2}\int_0^T\int_{\partial \omega} |\partial_{\bm{n}} u|^2 V\cdot \bm{n} d\sigma(x)dt+\frac{1}{2}\int_0^T\int_{\partial \Omega} |\partial_\nu u|^2 V\cdot \nu d\sigma(x)dt-\frac{3}{2}\int_0^T\int_{\Omega}  c^2|\nabla u|^2 dxdt.\end{aligned}$$
Combining this with \eqref{p3b}, we find
$$\begin{aligned}\int_0^T\int_\Omega F\nabla u\cdot(x-x_0)dxdt&=I_1-I_2\\
&=\frac{3}{2}\int_0^T\int_\Omega [|\partial_tu|^2+c^2|\nabla u|^2]dxdt+\int_\Omega \partial_tu(T,x)\nabla u(T,x)\cdot Vdx-\int_\Omega u_1\nabla u_0\cdot Vdx\\
\ &\ \ \ -\frac{(a^2-1)}{2}\int_0^T\int_{\partial\Omega} |\partial_\nu u|^2 V\cdot \bm{n} d\sigma(x)dt-\frac{1}{2}\int_0^T\int_{\partial \Omega} |\partial_\nu u|^2 V\cdot \nu d\sigma(x)dt\end{aligned}$$
and we obtain
$$\begin{aligned}&\frac{3}{2}\int_0^T\int_\Omega [|\partial_tu|^2+c^2|\nabla u|^2]dxdt+\int_\Omega \partial_tu(T,x)\nabla u(T,x)\cdot Vdx-\int_\Omega u_1\nabla u_0\cdot Vdx\\
&=-\frac{(1-a^2)}{2}\int_0^T\int_{\partial\Omega} |\partial_{\bm{n}} u|^2 V\cdot \bm{n} d\sigma(x)dt+\frac{1}{2}\int_0^T\int_{\partial \Omega} |\partial_\nu u|^2 V\cdot \nu d\sigma(x)dt+\int_0^T\int_\Omega F\nabla u\cdot(x-x_0)dxdt.\end{aligned}$$
Using the fact that $a\in(0,1)$ and applying \eqref{bord1}, we obtain that 
$$\frac{(1-a^2)}{2}\int_0^T\int_{\partial\Omega} |\partial_{\bm{n}} u|^2 V\cdot \bm{n} d\sigma(x)dt\geq0$$
and it follows that
$$\begin{aligned}&\frac{3}{2}\int_0^T\int_\Omega [|\partial_tu|^2+c^2|\nabla u|^2]dxdt+\int_\Omega \partial_tu(T,x)\nabla u(T,x)\cdot Vdx-\int_\Omega u_1\nabla u_0\cdot Vdx+\\
&\leq\frac{1}{2}\int_0^T\int_{\partial \Omega} |\partial_\nu u|^2 V\cdot \nu d\sigma(x)dt+2C(x_0)\int_0^T\int_\Omega |F|^2dxdt+\frac{1}{8}\int_0^T\int_\Omega |\nabla u|^2dxdt.\end{aligned}$$
Recalling that $c^2\geq a^2\geq \frac{1}{4}$, we obtain
\bel{p3c}\begin{aligned}&\int_0^T\int_\Omega [|\partial_tu|^2+c^2|\nabla u|^2]dxdt+\int_\Omega \partial_tu(T,x)\nabla u(T,x)\cdot Vdx-\int_\Omega u_1\nabla u_0\cdot Vdx+\\
&\leq\frac{1}{2}\int_0^T\int_{\partial \Omega} |\partial_\nu u|^2 V\cdot \nu d\sigma(x)dt+2C(x_0)\int_0^T\int_\Omega |F|^2dxdt.\end{aligned}.\ee
Fixing 
$$E(t):=\int_\Omega [|\partial_tu(t)|^2+c^2|\nabla u(t)|^2]dx,\quad t\in[0,T],$$
we deduce that $E\in \mathcal C^1([0,T])$ and $E'\equiv0$. It follows that
$$\int_0^T\int_\Omega [|\partial_tu|^2+c^2|\nabla u|^2]dxdt=\int_0^TE(t)dt=TE(0)=T\int_\Omega [|u_1|^2+c^2|\nabla u_0|^2]dx.$$
On the other hand, using the fact that $c\geq a$, we find
$$\begin{aligned}&\abs{\int_\Omega \partial_tu(T,x)\nabla u(T,x)\cdot Vdx-\int_\Omega u_1\nabla u_0\cdot Vdx}\\
\ &\leq\int_\Omega \abs{\partial_tu(T,x)}\abs{\nabla u(T,x)}\abs{V}+\int_\Omega \abs{u_1}\abs{\nabla u_0}\abs{ V}dx\\
\ &\leq C(x_0)a^{-2}(E(T)+E(0))=2C(x_0)a^{-2}\int_\Omega [|u_1|^2+c^2|\nabla u_0|^2]dx.\end{aligned}$$
Combining these estimates with \eqref{p3c}, we obtain
$$\begin{aligned}\left(T-2C(x_0)a^{-2}\right)\int_\Omega [|u_1|^2+c^2|\nabla u_0|^2]dx&\leq \int_0^T\int_{\partial \Omega} |\partial_\nu u|^2 |V| d\sigma(x)dt+2C(x_0)\int_0^T\int_\Omega |F|^2dxdt\\
&\leq C(x_0)\int_0^T\int_{\partial \Omega} |\partial_\nu u|^2  d\sigma(x)dt+2C(x_0)\int_0^T\int_\Omega |F|^2dxdt.\end{aligned}$$
This last estimate clearly implies \eqref{p3a}.\end{proof}

\section{Representation Formula}
Let the condition of Theorem \ref{t1} be fulfilled with $(c_j,f(c_j, \cdot), g)\in\mathcal F$, and denote  $f_j = f(c_i, \cdot)$.
 Recall that  there exists a constant $C$ depending on $\beta$ and $\Omega$ such that
\begin{equation}\label{t6aa}\norm{g}_{H^1(\Omega)}\leq C\norm{f_j}_{H^2(\Omega)} \end{equation}
and $(f_j,g)\in H^2(\Omega)\cap H^1(\Omega)$ satisfies the compatibility condition \eqref{p2a} with $(f,g)=(f_j,g_j)$, $j=1,2$. Let us consider $p_j\in  C^2([0,+\infty);L^2(\Omega))\cap C^1([0,+\infty);H^1(\Omega))\cap C([0,+\infty);H^2(\Omega))$ the solution of \eqref{eq1} with $c=c_j$ and $(f,g)=(f_j,g_j)$.  Consider $p=p_2-p_1$ and notice that $p$ solves the problem
\begin{equation}\label{eq3}\left\{\begin{array}{ll}\partial_t^2p-c_2^{2}\Delta_x p=c_2^{2}(c_1^{-2}-c_2^{-2})\partial_t^2p_1,\quad &\textrm{in}\ (0,+\infty)\times\Omega,\\  p(0,\cdot)=f,\quad \partial_tp(0,\cdot)=g,\quad &\textrm{in}\ \Omega,\\ \partial_\nu p+\beta\partial_tp=0,\quad &\textrm{on}\ (0,+\infty)\times\partial\Omega,\end{array}\right.\end{equation}
with $f=f_1-f_2$ and $g=g_1-g_2$.
The main result of this section can be stated as follows.

\begin{Thm}\label{t6}
Fix $T=4\textrm{diam}(\Omega)$, $a\in\left(\frac{3}{4},1\right)$ and define the space $\vphantom{H^1((0,T)\times\partial\Omega)}_0 H^1((0,T)\times\partial\Omega):=\{h\in H^1((0,T)\times\partial\Omega):\ h(0,\cdot)\equiv0\}$. Then, there exist three bounded linear operators $\mathcal G\in\mathcal B(L^\infty(\Omega);H^{1}_0(\Omega))$,  $\mathcal I\in\mathcal B(L^2((0,T)\times\partial\Omega);H^{1}_0(\Omega))$ and $\mathcal J\in\mathcal B(\vphantom{H^1((0,T)\times\partial\Omega)}_0 H^1((0,T)\times\partial\Omega);H^{1}_0(\Omega))$ such that the identity
\begin{equation}\label{t6a}\mathcal G[c_2^{2}(c_1^{-2}-c_2^{-2})]-f= \mathcal I[\beta\partial_t p|_{(0,T)\times\partial\Omega}] +\mathcal J [p|_{(0,T)\times\partial\Omega}] \end{equation}
holds true. Here the operators $\mathcal G$, $\mathcal I$, $\mathcal J$ are chosen in such a way that  there exists a constant $C>0$ depending on  $\Omega$,  and $\beta$ such that
\begin{equation}\label{t6b}\norm{\mathcal G}_{\mathcal B(L^\infty(\Omega);H^{1}_0(\Omega))}\leq C\norm{f_1}_{H^2(\Omega)},\end{equation}
\begin{equation}\label{t6bbb}\norm{\mathcal I}_{\mathcal B(L^2((0,T)\times\partial\Omega);H^{1}_0(\Omega))}+\norm{\mathcal J}_{\mathcal B(\vphantom{H^1((0,T)\times\partial\Omega)}_0 H^1((0,T)\times\partial\Omega);H^{1}_0(\Omega))}\leq C.\end{equation}
\end{Thm}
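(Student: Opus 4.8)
The plan is to derive the representation formula \eqref{t6a} from a combination of the duality/transposition method and the observability inequality of Proposition \ref{p3}. The function $p = p_2 - p_1$ solves the damped wave equation \eqref{eq3} with source term $S := c_2^{2}(c_1^{-2}-c_2^{-2})\partial_t^2 p_1$, initial data $(f,g)$ with $f = f_1 - f_2$, $g = g_1 - g_2 = 0$ (using condition (ii), the normal derivatives agree, so $g$ is independent of $c$), and the Robin boundary condition $\partial_\nu p + \beta \partial_t p = 0$. Since on the boundary $\partial_\nu p = -\beta \partial_t p$, the boundary data $p|_{(0,T)\times\partial\Omega}$ and $\partial_t p|_{(0,T)\times\partial\Omega}$ together encode the full Cauchy data of $p$ on the lateral boundary. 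First I would test \eqref{eq3} against a well-chosen family of solutions of the adjoint (here self-adjoint, up to time reversal) homogeneous wave problem with Dirichlet boundary condition, i.e.\ solutions $v$ of \eqref{eq2} with $c = c_2$, $F = 0$, $v = 0$ on $(0,T)\times\partial\Omega$, and arbitrary data $(v_0,v_1) \in H^1_0(\Omega)\times L^2(\Omega)$. Integration by parts in $(0,T)\times\Omega$ then produces an identity of the schematic form
\begin{equation}\label{dualityid}
\int_\Omega \bigl( \text{data terms in } f, g \bigr)\,dx + \int_0^T\!\!\int_\Omega S\, v \,dx\,dt = \int_0^T\!\!\int_{\partial\Omega}\bigl(\text{terms in } p|_{\partial\Omega},\ \partial_t p|_{\partial\Omega},\ \partial_\nu v\bigr)\,d\sigma\,dt + \bigl(\text{terms at } t = T\bigr).
\end{equation}
The key point is that, because $g = 0$ and because the source $S$ factors through the unknown coefficient $c_2^2(c_1^{-2}-c_2^{-2})$ multiplied by $\partial_t^2 p_1$ (which is controlled in $L^2((0,T)\times\Omega)$ by Proposition \ref{p1} and bounded by $M$), the left-hand side of \eqref{dualityid} is a bounded linear functional of $(v_0,v_1)$, while the right-hand side is a bounded linear functional of the boundary data; the coupling is what will define the operators $\mathcal G$, $\mathcal I$, $\mathcal J$.

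The next step is to invert this functional. Here is where the observability inequality enters: Proposition \ref{p3}, applied with $c = c_2$, $\omega = \omega_2$ (which is why $\omega_2$ is assumed star shaped), $F = 0$ and $T = 4\,\mathrm{diam}(\Omega) > 2 C(x_0) a^{-2}$ for $a$ close enough to $1$ — precisely the constraint $a \in (3/4,1)$, which guarantees $Ta^2 - 2C(x_0) > 0$ since $C(x_0) \le 2\,\mathrm{diam}(\Omega)$ — shows that the map $(v_0,v_1) \mapsto \partial_\nu v|_{(0,T)\times\partial\Omega}$ is bounded below: $\|(v_0,v_1)\|_{H^1_0\times L^2}$ is controlled by $\|\partial_\nu v\|_{L^2((0,T)\times\partial\Omega)}$. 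Consequently the solution map $(v_0,v_1)\mapsto \partial_\nu v$ is a bounded isomorphism from $H^1_0(\Omega)\times L^2(\Omega)$ onto a closed subspace $\mathcal N$ of $L^2((0,T)\times\partial\Omega)$, and the left side of \eqref{dualityid}, viewed as an element of the dual of $H^1_0(\Omega)\times L^2(\Omega) \cong H^{-1}(\Omega)\times L^2(\Omega)$, can be represented, via this isomorphism and the Riesz representation theorem, as testing against $\partial_\nu v$ for a suitable representative. Unwinding the $H^{-1}\times L^2$ pairing back to $H^1_0(\Omega)$ (using that $-\Delta$ with Dirichlet data is an isomorphism $H^1_0 \to H^{-1}$) is what yields the target spaces $H^1_0(\Omega)$ in the statement: the coefficient $c_2^2(c_1^{-2}-c_2^{-2}) \in L^\infty(\Omega)$, multiplied against $\partial_t^2 p_1$ and then against the solution operator, lands in $H^1_0(\Omega)$ and defines $\mathcal G$, with norm bounded by $\|\partial_t^2 p_1\|$ hence by $C\|f_1\|_{H^2(\Omega)}$, giving \eqref{t6b}; similarly the boundary terms linear in $p|_{\partial\Omega}$ and in $\partial_t p|_{\partial\Omega}$ define $\mathcal J$ and $\mathcal I$ with norms controlled by $C$, giving \eqref{t6bbb}. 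The term $f$ itself on the left of \eqref{t6a} comes from the initial-data contribution $\int_\Omega f\, (\ldots)\, dx$; care is needed to see it reproduces exactly $f$ and not $f$ composed with some operator — this is arranged by choosing the test functional so that the $f$-term is $\langle -\Delta v_0 \text{ or similar}, f\rangle$, matched against the $H^1_0$ pairing.

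The main obstacle I expect is bookkeeping of the boundary terms and the terms at time $t = T$ in \eqref{dualityid}, together with making sure every piece has the right regularity for the claimed operator mapping properties. Two specific difficulties: first, the term at $t = T$ involves $p(T,\cdot)$ and $\partial_t p(T,\cdot)$, which are not among the allowed data in \eqref{t6a} — one must either absorb them using finite speed of propagation and the choice $T = 4\,\mathrm{diam}(\Omega)$ (so that, backtracking the support, the relevant contributions at $t = T$ can be re-expressed through the lateral boundary data, or shown to vanish/cancel), or introduce a cutoff in time and handle the commutator, which is why the subspace $\vphantom{H^1((0,T)\times\partial\Omega)}_0 H^1((0,T)\times\partial\Omega)$ with vanishing trace at $t=0$ appears — it lets one integrate by parts in $t$ in the boundary integral $\int_0^T\!\!\int_{\partial\Omega} p\, \partial_\nu v$ without picking up a $t=0$ contribution, and the $\partial_t p$ boundary term only needs $L^2$ because it is paired directly. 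Second, to move the boundary integral $\int \partial_t p \, \partial_\nu v$ into a bounded functional one must know $\partial_\nu v \in L^2((0,T)\times\partial\Omega)$ with quantitative control — this is exactly the hidden-regularity/observability content of Proposition \ref{p3}, so the two ingredients (energy identity giving hidden regularity, multiplier identity giving the lower bound) must be invoked in tandem. Once the functional is shown bounded on $H^1_0(\Omega)\times L^2(\Omega)$ and the observability gives surjectivity-with-bound of the trace map, the construction of $\mathcal G$, $\mathcal I$, $\mathcal J$ and the estimates \eqref{t6b}, \eqref{t6bbb} follow by the closed range theorem and the open mapping theorem.
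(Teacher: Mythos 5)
Your overall strategy---pair the equation for $p=p_2-p_1$ against solutions of the homogeneous Dirichlet wave equation with speed $c_2$, integrate by parts, and use the observability inequality of Proposition \ref{p3} to convert the resulting duality identity into the operator identity \eqref{t6a}---is the right family of ideas, and your identification of the two danger points (the terms at $t=T$ and the hidden regularity of $\partial_\nu v$) is accurate. However, there is a genuine gap at precisely the first danger point, and the remedies you propose would not close it. If you test against a free Dirichlet solution $v$ with arbitrary data $(v_0,v_1)$, the integration by parts leaves the terms $\int_\Omega c_2^{-2}\bigl(\partial_t p(T,\cdot)\,v(T,\cdot)-p(T,\cdot)\,\partial_t v(T,\cdot)\bigr)dx$, and these cannot be eliminated by finite speed of propagation: $p$ solves a damped wave equation in the bounded domain $\Omega$, its energy decays but does not vanish in finite time, and $p(T,\cdot)$ in the interior is not expressible through the lateral boundary data without solving the very inverse problem at hand. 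A time cutoff fares no better, since the commutator terms $\chi''v+2\chi'\partial_t v$ produce \emph{interior} integrals of $p$ against known functions, which again are not among the admissible data in \eqref{t6a}.

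The paper's resolution, which is the one essential idea missing from your plan, is to use the observability inequality not (only) to represent a functional, but to \emph{construct a boundary-controlled test function}: by a Hahn--Banach/HUM argument (Lemma \ref{l4}) one builds $\Lambda\in\mathcal B(D(L^{-\frac12});L^2((0,T)\times\partial\Omega))$ so that, for each $\phi_0$, the transposition solution $\phi$ with $\phi(0,\cdot)=0$, $\partial_t\phi(0,\cdot)=\phi_0$ and Dirichlet trace $\Lambda\phi_0$ satisfies $\phi(T,\cdot)=\partial_t\phi(T,\cdot)=0$. Testing \eqref{eq3} against this $\phi$ makes the $t=T$ terms vanish identically, the $t=0$ terms reduce to $\langle\phi_0,f\rangle$ (the $g$-contribution dies because $\phi(0,\cdot)=0$), and the boundary terms involve only $\Lambda\phi_0$ paired with $\beta\partial_t p$ and $\partial_\nu\phi$ paired with $p$. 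The operators are then obtained as adjoints: $\mathcal G=K^*$ with $K\phi_0=\int_0^T\partial_t^2p_1\,\phi\,dt$ (whence the bound by $\norm{f_1}_{H^2(\Omega)}$ in \eqref{t6b} via Proposition \ref{p1}), $\mathcal I=\Lambda^*$, and $\mathcal J=\mathcal M^*$ with $\mathcal M\phi_0=\partial_\nu\phi$, the latter defined by duality against traces vanishing at $t=0$, which is where the space of $h\in H^1((0,T)\times\partial\Omega)$ with $h(0,\cdot)\equiv 0$ enters. Your closed-range/Riesz machinery is morally the same HUM duality, but without the controllability step that kills the final-time data the representation formula you would obtain contains spurious terms in $p(T,\cdot)$ and $\partial_t p(T,\cdot)$, and the theorem as stated does not follow.
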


The proof of Theorem \ref{t6} is based on the construction of a boundary operator that we build by applying the observability inequality of Proposition  \ref{p3}. For this purpose, let us first recall the definition  of solutions of \eqref{eq2}, when $F\equiv0$, in the transposition sense. For this purpose, let us consider the unbounded operator $L$ acting on $L^2(\Omega;c^{-2}dx)$ with domain $D(L)=H^1_0(\Omega)\cap H^2(\Omega)$ defined by $Lh=-c^2 \Delta h$, $h\in D(L)$. It is well known that $L$ is a strictly positive selfadjoint operator with a compact resolvent. Therefore, the spectrum of $L$ consists of a non-decreasing sequence of strictly positive eigenvalues $(\lambda_k)_{k\geq1}$. Let us also introduce 
an orthonormal  basis in the Hilbert space $L^2(\Omega;c^{-2} dx)$ of eigenfunctions $(\phi_k)_{k\geq1}$ of $L$ associated with the non-decreasing sequence of  eigenvalues $(\lambda_k)_{k\geq1}$. For all $s\geq 0$, we denote by $L^s$ the operator defined by 
$$L^s g=\sum_{k=1}^{+\infty}\left\langle g,\phi_k\right\rangle \lambda_{k}^s\phi_{k},\quad g\in D(L^s)=\left\{h\in L^2(\Omega):\ \sum_{k=1}^{+\infty}\abs{\left\langle g,\phi_{k}\right\rangle}^2 \lambda_{k}^{2s}<\infty\right\}$$
and consider on $D(L^s)$ the norm
\[\|g\|_{D(L^s)}=\left(\sum_{k=1}^{+\infty}\abs{\left\langle g,\phi_{k}\right\rangle}^2 \lambda_{k}^{2s}\right)^{\frac{1}{2}},\quad g\in D(L^s).\]
We can also set $D(L^{-s})$ the dual space of $D(L^s)$, with respect to $L^2(\Omega;c^{-2} dx)$,  which is a Hilbert space with 
the norm 
\[\norm{v}_{D(L^{-s})}=\left(\sum_{k=1}^\infty \abs{\left\langle v,\phi_{k}\right\rangle_{-2s}}^2\lambda_{k}^{-2s}\right)^{\frac{1}{2}}.\]
Let us recall that $D(L^{\frac{1}{2}})=H^1_0(\Omega)$ with equivalent norm but due to the fact that the coefficient $c$ is not smooth the space $D(L^{-\frac{1}{2}})$ do not coincide with $H^{-1}(\Omega)$. In addition, one can check that
$$\norm{v}_{D(L^{\frac{1}{2}})}^2=\norm{\nabla v}_{L^2(\Omega)}^2,\quad v\in H^1_0(\Omega)$$
and deduce that there exists $C>1$ deppending only on $\Omega$ such that
\bel{L}C^{-1}\norm{v}_{H^1(\Omega)}\leq\norm{v}_{D(L^{\frac{1}{2}})}\leq C\norm{v}_{H^1(\Omega)},\quad v\in H^1_0(\Omega).\ee

Fixing $\psi_0\in L^2(\Omega;c^{-2} dx)$, $\psi_1\in D(L^{-\frac{1}{2}})$ and $g\in L^2((0,T)\times\partial\Omega)$, we consider the solution in the transposition sense of
\begin{equation}\label{eq112}\left\{\begin{array}{ll}\partial_t^2\psi-c^2_2\Delta_x \psi=0,\quad &\textrm{in}\ (0,T)\times\Omega,\\  \psi(0,\cdot)=\psi_0,\quad \partial_t\psi(0,\cdot)=\psi_1,\quad &\textrm{in}\ \Omega,\\  \psi=g,\quad &\textrm{on}\ (0,T)\times\partial\Omega.\end{array}\right.\end{equation}
More precisely, for any $F\in L^1(0,T;L^2(\Omega))$ let us consider the solution $v_F\in \mathcal C([0,T];H^1_0(\Omega))\cap \mathcal C^1([0,T];L^2(\Omega))$ of 
$$\left\{\begin{array}{ll}\partial_t^2v_F-c^2_2\Delta_x v_F=F,\quad &\textrm{in}\ (0,T)\times\Omega,\\  v_F(T,\cdot)=0,\quad \partial_tv_F(T,\cdot)=0,\quad &\textrm{in}\ \Omega,\\  v_F=0,\quad &\textrm{on}\ (0,T)\times\partial\Omega.\end{array}\right.$$
The solution in the transposition sense of \eqref{eq112}, is the unique element $\psi\in L^\infty(0,T;L^2(\Omega;c^{-2}dx))$ satisfying
\bel{trans1a}\begin{aligned}&\int_0^T\left\langle \psi(t,\cdot),F(t,\cdot)\right\rangle_{L^2(\Omega;c^{-2}dx)} dt\\
&=-\left\langle \psi_0,\partial_tv_F(0,\cdot)\right\rangle_{L^2(\Omega;c^{-2}dx)}+\left\langle \psi_1,v_F(0,\cdot)\right\rangle_{D(L^{-\frac{1}{2}}),D(L^{\frac{1}{2}})}-\int_0^T\int_{\partial\Omega}\partial_\nu v_F(t,x)g(t,x)d\sigma(x)dt.\end{aligned}\ee
In a similar way to \cite[Corollary 2.36]{KKL}, we can show that \eqref{eq112} admits a unique  solution $\psi\in C([0,T];L^2(\Omega;c^{-2}dx))\cap C^1([0,T]:D(L^{-\frac{1}{2}}))$ in the transposition sense.

Let us consider the problem
\begin{equation}\label{eq6}\left\{\begin{array}{ll}\partial_t^2\phi-c_2^{2}\Delta_x \phi=0,\quad &\textrm{in}\ (0,T)\times\Omega,\\  \phi(0,\cdot)=0,\quad \partial_t\phi(0,\cdot)=\phi_0,\quad &\textrm{in}\ \Omega,\\  \phi(T,\cdot)=0,\quad \partial_t\phi(T,\cdot)=0,\quad &\textrm{in}\ \Omega,\\ \phi=\Lambda \phi_0,\quad &\textrm{on}\ (0,T)\times\partial\Omega,\end{array}\right.\end{equation}
with $\Lambda$ a suitable control operator from $D(L^{-\frac{1}{2}})$ to $L^2((0,T)\times\partial\Omega)$. We recall that $c_2$ takes the form \eqref{speed} with $\omega=\omega_2$ a star-shapped domain. Therefore, applying the observability inequality of Proposition \ref{p3}, we can prove the following result.
\begin{lem}\label{l4} For $T=4\textrm{diam}(\Omega)$ and $a\in\left(\frac{3}{4},1\right)$, we can define $\Lambda\in \mathcal B(D(L^{-\frac{1}{2}});L^2((0,T)\times\partial\Omega))$ such that for any $\phi_0\in D(L^{-\frac{1}{2}})$, problem \eqref{eq6} admits a unique solution $\phi\in \mathcal C([0,T];L^2(\Omega))\cap \mathcal C^1([0,T];D(L^{-\frac{1}{2}}))$ in the transposition sense satisfying
\begin{equation}\label{l4a}\norm{\phi}_{L^\infty(0,T;L^2(\Omega))}\leq C\norm{\phi_0}_{D(L^{-\frac{1}{2}})}\end{equation}
with $C>0$ depending only on $\Omega$.
\end{lem}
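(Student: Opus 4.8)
The plan is to construct $\Lambda$ via the classical Hilbert Uniqueness Method (HUM), using Proposition \ref{p3} as the key observability input. First I would introduce, for $\phi_0\in \mathcal C^\infty_0(\Omega)$, the backward adjoint problem
$$\left\{\begin{array}{ll}\partial_t^2 w-c_2^{2}\Delta_x w=0,\quad &\textrm{in}\ (0,T)\times\Omega,\\  w(0,\cdot)=0,\quad \partial_t w(0,\cdot)=\phi_0,\quad &\textrm{in}\ \Omega,\\  w=0,\quad &\textrm{on}\ (0,T)\times\partial\Omega,\end{array}\right.$$
whose solution lies in $\mathcal C([0,T];H^1_0(\Omega))\cap\mathcal C^1([0,T];L^2(\Omega))$ with $\partial_\nu w|_{(0,T)\times\partial\Omega}\in L^2((0,T)\times\partial\Omega)$ by the regularity recalled for \eqref{eq2}. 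Because $T=4\,\textrm{diam}(\Omega)>2C(x_0)a^{-2}$ whenever $a\in(3/4,1)$ — here one uses $C(x_0)\leq \textrm{diam}(\Omega)$ and $a^{-2}<16/9<2$, so $2C(x_0)a^{-2}<4\,\textrm{diam}(\Omega)=T$ — Proposition \ref{p3} (with $F\equiv0$) gives the direct observability estimate
$$\int_\Omega\big[|\phi_0|^2+c_2^2|\nabla 0|^2\big]dx=\int_\Omega|\phi_0|^2dx\leq \frac{2C(x_0)}{Ta^2-2C(x_0)}\int_0^T\int_{\partial\Omega}|\partial_\nu w|^2\,d\sigma(x)\,dt.$$
Since $D(L^{-1/2})$ is the dual of $D(L^{1/2})=H^1_0(\Omega)$ and $\phi_0\mapsto \partial_t w$ is, at $t=0$, simply the map $\phi_0\mapsto \phi_0$, the quantity $\|\phi_0\|_{D(L^{-1/2})}$ is controlled by $\|\phi_0\|_{L^2(\Omega;c_2^{-2}dx)}$; combined with the observability estimate this shows $\phi_0\mapsto\partial_\nu w|_{(0,T)\times\partial\Omega}$ extends to a bounded injective map on $D(L^{-1/2})$ with a lower bound $\|\phi_0\|_{D(L^{-1/2})}\leq C\|\partial_\nu w\|_{L^2((0,T)\times\partial\Omega)}$ (passing from the $L^2$-norm lower bound to the weaker $D(L^{-1/2})$-norm only makes the inequality easier).

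Next I would set up the HUM bilinear form: define $\Lambda\phi_0:=\partial_\nu w|_{(0,T)\times\partial\Omega}$ and let $\phi$ be the solution in the transposition sense of \eqref{eq112} with data $\psi_0=0$, $\psi_1=\phi_0$ and boundary value $g=\Lambda\phi_0$. The problem is then to check that with this choice of boundary data the transposition solution automatically satisfies the terminal conditions $\phi(T,\cdot)=\partial_t\phi(T,\cdot)=0$, so that \eqref{eq6} indeed holds. This is the standard HUM closing argument: testing the definition \eqref{trans1a} against the adjoint states and using Green's formula, the map $\phi_0\mapsto(-\partial_t\phi(T,\cdot),\phi(T,\cdot))$ coincides (up to the duality identifications) with the coercive operator $\phi_0\mapsto$ \emph{"observe then control"}, whose coercivity is exactly the observability estimate above; injectivity plus the surjectivity coming from the Lax--Milgram argument on the energy space forces the terminal data to vanish. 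The bound \eqref{l4a} then follows: from \eqref{trans1a} with the specific $\psi_0,\psi_1,g$, the norm $\|\phi\|_{L^\infty(0,T;L^2(\Omega))}$ is estimated by $\|\phi_0\|_{D(L^{-1/2})}(\|\partial_t v_F(0,\cdot)\|+\|v_F(0,\cdot)\|_{H^1_0})+\|\Lambda\phi_0\|_{L^2((0,T)\times\partial\Omega)}\|\partial_\nu v_F\|_{L^2((0,T)\times\partial\Omega)}$, and $\|\Lambda\phi_0\|_{L^2((0,T)\times\partial\Omega)}\leq C\|\phi_0\|_{L^2(\Omega;c_2^{-2}dx)}$ by the direct estimate for \eqref{eq2} applied to $w$, hence $\leq C\|\phi_0\|_{D(L^{-1/2})}$ after noting the relevant norms are comparable on the relevant spaces, with $C$ depending only on $\Omega$ (the $a$-dependence in the observability constant is harmless since $a\in(3/4,1)$ makes $Ta^2-2C(x_0)$ bounded below by a constant depending only on $\Omega$).

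The main obstacle I expect is the careful bookkeeping of the two distinct functional settings — the "strong" energy space $H^1_0(\Omega)\times L^2(\Omega;c_2^{-2}dx)$ on which the adjoint problem lives and the "weak" transposition space $L^2(\Omega;c_2^{-2}dx)\times D(L^{-1/2})$ on which $\phi$ lives — together with the fact, emphasized in the excerpt, that $D(L^{-1/2})\neq H^{-1}(\Omega)$ because $c_2$ is non-smooth. One must phrase all duality pairings through $L^2(\Omega;c_2^{-2}dx)$ and the operator $L$ (never through the flat $L^2$ or $H^{\pm1}$) so that the integrations by parts in \eqref{trans1a} are legitimate; this is where one invokes the construction of transposition solutions analogous to \cite[Corollary 2.36]{KKL}. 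Once that framework is fixed, the argument is the textbook HUM scheme and the observability inequality of Proposition \ref{p3} does all the analytic work.
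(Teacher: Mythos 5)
Your overall strategy (HUM driven by the observability inequality of Proposition \ref{p3}) is the right one, and your verification that $T=4\,\mathrm{diam}(\Omega)>2C(x_0)a^{-2}$ for $a\in(3/4,1)$ matches the paper. But there are two genuine gaps in the way you build $\Lambda$. First, you set $\Lambda\phi_0:=\partial_\nu w$ where $w$ is the \emph{free} solution with data $(0,\phi_0)$ --- the same $\phi_0$ you are trying to control --- and then assert that the resulting transposition solution ``automatically'' has vanishing terminal data. It does not: in HUM the null control is $\partial_\nu e$ for an adjoint solution $e$ whose data are obtained by \emph{inverting} the controllability Gramian (your ``observe then control'' operator) via Lax--Milgram, and that inversion changes the data; your closing argument is in fact self-contradictory, since if the map $\phi_0\mapsto(-\partial_t\phi(T,\cdot),\phi(T,\cdot))$ were coercive it could not vanish on nonzero $\phi_0$. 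The paper sidesteps the explicit inversion by a dual construction: it takes $w$ to be the \emph{backward} solution \eqref{l4c} with terminal data $(u_0,u_1)$ ranging over $H^1_0(\Omega)\times L^2(\Omega)$, uses Proposition \ref{p3} to bound $\norm{u_0}_{H^1(\Omega)}+\norm{u_1}_{L^2(\Omega)}$ by $\norm{\partial_\nu w}_{L^2((0,T)\times\partial\Omega)}$, and defines $\Lambda\phi_0$ by Hahn--Banach and Riesz representation as the representative of the functional $\partial_\nu w\mapsto\left\langle \psi_1(T,\cdot),u_1\right\rangle-\left\langle \partial_t\psi_1(T,\cdot),u_0\right\rangle$ (identity \eqref{l4i}); the terminal conditions then hold by construction because one deduces $\psi_2(T,\cdot)=-\psi_1(T,\cdot)$ and $\partial_t\psi_2(T,\cdot)=-\partial_t\psi_1(T,\cdot)$.

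Second, and independently, your $\Lambda$ cannot satisfy the required mapping property $\Lambda\in\mathcal B(D(L^{-\frac{1}{2}});L^2((0,T)\times\partial\Omega))$. The hidden-regularity (direct) estimate controls $\norm{\partial_\nu w}_{L^2((0,T)\times\partial\Omega)}$ by the \emph{energy} norm of the data, i.e.\ by $\norm{\phi_0}_{L^2(\Omega)}$, and your claim that this is bounded by $C\norm{\phi_0}_{D(L^{-\frac{1}{2}})}$ ``after noting the relevant norms are comparable'' is false: $D(L^{-\frac{1}{2}})$ is strictly weaker than $L^2(\Omega)$ (it is the dual of $D(L^{\frac{1}{2}})=H^1_0(\Omega)$), so the inequality you need goes the wrong way, and for $\phi_0\in D(L^{-\frac{1}{2}})\setminus L^2(\Omega)$ the normal derivative of the free solution is not even an $L^2$ function on the lateral boundary. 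The gain of one derivative --- boundedness from $D(L^{-\frac{1}{2}})$ rather than from $L^2(\Omega)$ --- is exactly what the duality construction buys, via the transposition estimate \eqref{l4l} for $\psi_1$ combined with the observability bound on $(u_0,u_1)$.
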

\begin{proof}
Let us start by proving the existence of a solution of problem \eqref{eq6}  in the transposition sense.
Let $\phi_0\in D(L^{-\frac{1}{2}})$, $g\in L^2((0,T)\times\partial\Omega)$, $F\in L^1(0,T;L^2(\Omega))$, $(u_0,u_1)\in H^1_0(\Omega)\times L^2(\Omega)$ and consider the solutions of the following initial boundary value problems
\begin{equation}\label{l4b}\left\{\begin{array}{ll}\partial_t^2v-c_2^{-2}\Delta_x v=F,\quad &\textrm{in}\ (0,T)\times\Omega,\\  v(T,\cdot)=0,\quad \partial_tv(T,\cdot)=0,\quad &\textrm{in}\ \Omega,\\  u=0,\quad &\textrm{on}\ (0,T)\times\partial\Omega,\end{array}\right.\end{equation}
\begin{equation}\label{l4c}\left\{\begin{array}{ll}\partial_t^2w-c_2^2\Delta_x w=0,\quad &\textrm{in}\ (0,T)\times\Omega,\\  w(T,\cdot)=u_0,\quad \partial_tw(T,\cdot)=u_1,\quad &\textrm{in}\ \Omega,\\  w=0,\quad &\textrm{on}\ (0,T)\times\partial\Omega,\end{array}\right.\end{equation} 
\begin{equation}\label{l4d}\left\{\begin{array}{ll}\partial_t^2\psi_1-c_2^2\Delta_x \psi_1=0,\quad &\textrm{in}\ (0,T)\times\Omega,\\  \psi_1(0,\cdot)=0,\quad \partial_t\psi_1(0,\cdot)=\phi_0,\quad &\textrm{in}\ \Omega,\\  \psi_1=0,\quad &\textrm{on}\ (0,T)\times\partial\Omega,\end{array}\right.\end{equation}
\begin{equation}\label{l4e}\left\{\begin{array}{ll}\partial_t^2\psi_2-c_2^2\Delta_x \psi_2=0,\quad &\textrm{in}\ (0,T)\times\Omega,\\  \psi_2(0,\cdot)=0,\quad \partial_t\psi_2(0,\cdot)=0,\quad &\textrm{in}\ \Omega,\\  \psi_2=g,\quad &\textrm{on}\ (0,T)\times\partial\Omega.\end{array}\right.\end{equation}
Let us recall that problem \eqref{l4d} admits a unique solution in the transposition sense $\psi_1\in \mathcal C([0,T];L^2(\Omega))\cap \mathcal C^1([0,T];D(L^{-\frac{1}{2}}))$ which is the unique element of $L^\infty(0,T;L^2(\Omega;c^{-2}dx))$ satisfying,  for all $F\in L^1(0,T;L^2(\Omega))$, the following identity
\begin{equation}\label{l4f}\int_0^T\int_\Omega c_2^{-2}\psi_1 Fdxdt=\left\langle \phi_0,v(0,\cdot)\right\rangle_{D(L^{-\frac{1}{2}}),D(L^{\frac{1}{2}})},\end{equation}
with $v\in \mathcal C([0,T];H^1_0(\Omega))\cap \mathcal C^1([0,T];L^2(\Omega))$ the solution of \eqref{l4b}.
In the same way,  problem \eqref{l4e} admits a unique solution in the transposition sense $\psi_2\in \mathcal C([0,T];L^2(\Omega))\cap \mathcal C^1([0,T];D(L^{-\frac{1}{2}}))$ which is the unique element of $L^\infty(0,T;L^2(\Omega))$ satisfying  for all $F\in L^1(0,T;L^2(\Omega))$ 
\begin{equation}\label{l4g}\int_0^T\int_\Omega c_2^{-2}\psi_2 Fdxdt=-\int_0^T\int_{\partial\Omega}g\partial_\nu vd\sigma(x)dt,\end{equation}
with $v\in \mathcal C([0,T];H^1_0(\Omega))\cap \mathcal C^1([0,T];L^2(\Omega))$ the solution of \eqref{l4b}.
Since $T>2Diam(\Omega)a^{-2}$, Proposition \ref{p3} and the Poincarr\'e inequality imply that for any $(u_0,u_1)\in H^1_0(\Omega)\times L^2(\Omega)$ we have
$$\norm{u_1}_{L^2(\Omega)}+\norm{u_0}_{H^1(\Omega)}\leq C_1\left(\frac{2Diam(\Omega)}{(Ta^2-2Diam(\Omega))}\right)^{\frac{1}{2}}\norm{\partial_\nu w}_{L^2((0,T)\times\partial\Omega)},$$
with $C_1>0$ depending only on $\Omega$. Using the fact that $a>\frac{3}{4}$ and $T=4\textrm{diam}(\Omega)$, we deduce that $Ta^2-2Diam(\Omega))\geq \frac{Diam(\Omega))}{4}>0$ and we get
\begin{equation}\label{l4h}\norm{u_1}_{L^2(\Omega)}+\norm{u_0}_{H^1(\Omega)}\leq 4C_1\norm{\partial_\nu w}_{L^2((0,T)\times\partial\Omega)},\end{equation}
 Therefore, applying the Hahn Banach theorem, for any $\phi_0\in D(L^{-\frac{1}{2}})$ we can define $\Lambda \phi_0\in L^2((0,T)\times\partial\Omega)$ such that
\begin{equation}\label{l4i}\int_0^T\int_{\partial\Omega}\Lambda \phi_0\partial_\nu wd\sigma(x)dt=\left\langle \psi_1(T,\cdot),u_1\right\rangle_{L^2(\Omega;c^{-2}dx)}-\left\langle \partial_t\psi_1(T,\cdot),u_0\right\rangle_{D(L^{-\frac{1}{2}}),D(L^{\frac{1}{2}})}.\end{equation}
 Now let us consider $\mathcal N$ to be the continuous linear form on $L^1(0,T;L^2(\Omega))\times H^1_0(\Omega)\times L^2(\Omega)$ given by
$$\mathcal N(F,u_0,u_1)=-\int_0^T\int_{\partial\Omega}\Lambda \phi_0\partial_\nu vd\sigma(x)dt-\int_0^T\int_{\partial\Omega}\Lambda \phi_0\partial_\nu wd\sigma(x)dt.$$
Then, there exists $\psi_2\in L^\infty(0,T;L^2(\Omega))$, $h_1\in L^2(\Omega;c^{-2}dx)$, $h_2\in D(L^{-\frac{1}{2}})$ such that
$$\begin{aligned}&\int_0^T\int_\Omega \psi_2 Fdxdt+\left\langle h_1,u_1\right\rangle_{L^2(\Omega)}+\left\langle h_2,u_0\right\rangle_{D(L^{-\frac{1}{2}}),D(L^{\frac{1}{2}})}\\
&=-\int_0^T\int_{\partial\Omega}\Lambda \phi_0\partial_\nu vd\sigma(x)dt-\int_0^T\int_{\partial\Omega}\Lambda \phi_0\partial_\nu wd\sigma(x)dt.\end{aligned}$$
Fixing $u_0=u_1=0$, we deduce that $\psi_2\in L^\infty(0,T;L^2(\Omega))$ satisfies \eqref{l4g} and from the uniqueness of this expression, we deduce that  $\psi_2\in \mathcal C([0,T];L^2(\Omega))\cap \mathcal C^1([0,T];D(L^{-\frac{1}{2}}))$ is the unique solution in the transposition sense of \eqref{l4e} with $g=\Lambda\phi_0$. In the same way, fixing $F=0$, we deduce that $h_1=\psi_2(T,\cdot)$ and $h_2=-\partial_t\psi_2(T,\cdot)$ and it follows
$$\left\langle \psi_2(T,\cdot),u_1\right\rangle_{L^2(\Omega;c^{-2}dx)}-\left\langle \partial_t\psi_2(T,\cdot),u_0\right\rangle_{D(L^{-\frac{1}{2}}),D(L^{\frac{1}{2}})}=-\int_0^T\int_{\partial\Omega}\Lambda \phi_0\partial_\nu wd\sigma(x)dt.$$
Combining this with \eqref{l4i}, we deduce that $\psi_2(T,\cdot)=-\psi_1(T,\cdot)$ and $\partial_t\psi_2(T,\cdot)=-\partial_t\psi_1(T,\cdot)$ and $\phi=\psi_1+\psi_2$ solves \eqref{eq6} in the sense of transposition. This completes the proof of the first statement of Lemma \ref{l4a}. In order to complete the proof of the lemma, we need to prove the estimate \eqref{l4a}. For this purpose, let us first observe that using the fact that $\frac{1}{2}\leq a\leq c_2\leq1$ and applying classical energy estimates, one can check that there exists a constant $C_2>0$ depending only on  $\Omega$ such that
for  $v\in \mathcal C([0,T];H^1_0(\Omega))\cap \mathcal \mathcal C^1([0,T];L^2(\Omega))$ solving  \eqref{l4b} and $w\in  C([0,T];H^1_0(\Omega))\cap  C^1([0,T];L^2(\Omega))$ solving  \eqref{l4c}, we have
\begin{equation}\label{l4j}\norm{v}_{ C([0,T];H^1_0(\Omega))}+\norm{v}_{ C^1([0,T];L^2(\Omega))}+\norm{\partial_\nu v}_{L^2((0,T)\times\partial\Omega)}\leq C_2\norm{F}_{L^1(0,T;L^2(\Omega))},\end{equation}
\begin{equation}\label{l4k}\norm{w}_{ C([0,T];H^1_0(\Omega))}+\norm{w}_{ C^1([0,T];L^2(\Omega))}+\norm{\partial_\nu w}_{L^2((0,T)\times\partial\Omega)}\leq C_2(\norm{u_0}_{H^1(\Omega)}+\norm{u_1}_{L^2(\Omega)}).\end{equation}
In the same way, following the argumentation in \cite[Lemma 2.42]{KKL}, we can prove that there exists a constant $C_3>0$ depending only on  $\Omega$  such that
\begin{equation}\label{l4l}\norm{\psi_1}_{ C([0,T];L^2(\Omega))}+\norm{\psi_1}_{ C^1([0,T];D(L^{-\frac{1}{2}}))}\leq C_3\norm{\phi_0}_{D(L^{-\frac{1}{2}})}.\end{equation}
Combining formula \eqref{l4h}-\eqref{l4i} with \eqref{l4l}, we deduce that
$$\begin{aligned}&\abs{\int_0^T\int_{\partial\Omega}\Lambda \phi_0\partial_\nu wd\sigma(x)dt}\\
&\leq \norm{\psi_1}_{ C([0,T];L^2(\Omega))}\norm{u_0}_{L^2(\Omega)}+\norm{\psi_1}_{ C^1([0,T];D(L^{-\frac{1}{2}}))}\norm{u_1}_{H^{1}(\Omega)}\\
&\leq C_3\norm{\phi_0}_{D(L^{-\frac{1}{2}})}(\norm{u_0}_{L^2(\Omega)}+\norm{u_1}_{H^{1}(\Omega)})\\
&\leq 4C_3C_1\norm{\partial_\nu w}_{L^2((0,T)\times\partial\Omega)}\norm{\phi_0}_{D(L^{-\frac{1}{2}})}.\end{aligned}$$
Thus, we have $\Lambda\in\mathcal B(D(L^{-\frac{1}{2}});L^2((0,T)\times\partial\Omega))$ with
\begin{equation}\label{l4m}\norm{\Lambda}_{\mathcal B(D(L^{-\frac{1}{2}});L^2((0,T)\times\partial\Omega))}\leq 4C_3C_1.\end{equation}
Now recall that for any $F\in L^1(0,T;L^2(\Omega))$ and $v\in  C([0,T];H^1_0(\Omega))\cap  C^1([0,T];L^2(\Omega))$ solving  \eqref{l4b}, we have
$$\int_0^T\int_\Omega c^{-2}_2\psi_2 Fdxdt=-\int_0^T\int_{\partial\Omega}\Lambda \phi_0\partial_\nu vd\sigma(x)dt$$
and applying \eqref{l4j} and \eqref{l4m}, we obtain
$$\begin{aligned}\abs{\int_0^T\int_\Omega c_2^{-2}\psi_2 Fdxdt}&\leq \norm{\Lambda \phi_0}_{L^2((0,T)\times\partial\Omega)}\norm{\partial_\nu v}_{L^2((0,T)\times\partial\Omega)}\\
&\leq 4C_3C_1\norm{\phi_0}_{D(L^{-\frac{1}{2}})}C_2\norm{F}_{L^1(0,T;L^2(\Omega))}.\end{aligned}$$
Therefore, we find
$$\norm{\psi_2}_{L^\infty(0,T;L^2(\Omega))}\leq 4C_3C_2C_1\norm{\phi_0}_{D(L^{-\frac{1}{2}})}.$$
Combining this with \eqref{l4l} and recalling that $\phi=\psi_1+\psi_2$ we deduce \eqref{l4a}. This completes the proof of the lemma.\end{proof}

Applying Lemma \ref{l4}, we are now in position to complete the proof of Theorem \ref{t6}.

 \textbf{Proof of Theorem \ref{t6}.} Let us first recall that, following the arguments of \cite[Theorem 2.1]{LLT} (see also \cite[Lemma 2.43]{KKL}), for all $v_0\in H^1_0(\Omega)$, $g\in  \vphantom{H^1((0,T)\times\partial\Omega)}_0 H^1((0,T)\times\partial\Omega)$, $F\in L^1(0,T;L^2(\Omega))$ one can check that the problem
$$\left\{\begin{array}{ll}\partial_t^2v-c_2^2\Delta_x v=F,\quad &\textrm{in}\ (0,T)\times\Omega,\\  v(0,\cdot)=v_0,\quad \partial_tv(0,\cdot)=0,\quad &\textrm{in}\ \Omega,\\  v=g,\quad &\textrm{on}\ (0,T)\times\partial\Omega,\end{array}\right.$$
admits a unique solution $v\in C^1([0,T];L^2(\Omega))\cap C([0,T];H^1(\Omega))$ satisfying $\partial_\nu v\in L^2((0,T)\times\partial\Omega)$
and the estimate
$$\norm{v}_{C^1([0,T];L^2(\Omega))}+\norm{v}_{C([0,T];H^1(\Omega))}+\norm{\partial_\nu v}_{L^2((0,T)\times\partial\Omega)}\leq C(\norm{u_0}_{H^1(\Omega)}+\norm{F}_{L^1(0,T;L^2(\Omega))}+\norm{g}_{H^1((0,T)\times\partial\Omega)}),$$
with $C>0$ depending on $\Omega$ and $\beta$. Therefore, applying Lemma \ref{l4}, we can consider $\phi\in C([0,T];L^2(\Omega))$ solving \eqref{eq6} and  we can define the normal derivative of $\phi$ as an element of the dual space $\vphantom{H^1((0,T)\times\partial\Omega)}_0 H^1((0,T)\times\partial\Omega)'$  of $\vphantom{H^1((0,T)\times\partial\Omega)}_0 H^1((0,T)\times\partial\Omega)$ given by 
$$\left\langle \partial_\nu\phi,g\right\rangle_{\vphantom{H^1((0,T)\times\partial\Omega)}_0 H^1((0,T)\times\partial\Omega)',\vphantom{H^1((0,T)\times\partial\Omega)}_0 H^1((0,T)\times\partial\Omega)}=-\left\langle \phi_0,v_0\right\rangle_{D(L^{-\frac{1}{2}}),D(L^{\frac{1}{2}})}+\left\langle \Lambda\phi_0,\partial_\nu v\right\rangle_{L^2((0,T)\times\partial\Omega)}+\int_0^T\int_\Omega \phi Fdx dt.$$
Using this property, the fact that, since $f_1=f_2$ on $\partial\Omega$, we have $f\in H^1_0(\Omega)$ and the fact that $p\in C^2([0,+\infty);L^2(\Omega))\cap C^1([0,+\infty);H^1(\Omega))\cap C([0,+\infty);H^2(\Omega))$,  we obtain the following identity
$$\begin{aligned}&\left\langle \phi_0,f\right\rangle_{D(L^{-\frac{1}{2}});D(L^{\frac{1}{2}})}-\left\langle\Lambda\phi_0,\partial_\nu p \right\rangle_{L^2((0,T)\times\partial\Omega)}+\left\langle\partial_\nu\phi, p \right\rangle_{\vphantom{H^1((0,T)\times\partial\Omega)}_0 H^1((0,T)\times\partial\Omega)',\vphantom{H^1((0,T)\times\partial\Omega)}_0 H^1((0,T)\times\partial\Omega)}\\
&=\int_0^T\int_\Omega (\partial_t^2p-c_1^{2}\Delta_x p)\phi dxdt=\int_\Omega c_2^{2}(c_1^{-2}-c_2^{-2})\int_0^T\partial_t^2p_1\phi dt dx.\end{aligned}$$
Now using the fact that $\partial_\nu p=-\beta\partial_tp$ on $(0,T)\times\partial\Omega$, we obtain

\begin{equation}\label{tt1}\begin{aligned}&\left\langle \phi_0,f\right\rangle_{D(L^{-\frac{1}{2}}),D(L^{\frac{1}{2}})}+\left\langle\Lambda\phi_0,\beta\partial_t p \right\rangle_{L^2((0,T)\times\partial\Omega)}+\left\langle\partial_\nu\phi, p \right\rangle_{\vphantom{H^1((0,T)\times\partial\Omega)}_0 H^1((0,T)\times\partial\Omega)',\vphantom{H^1((0,T)\times\partial\Omega)}_0 H^1((0,T)\times\partial\Omega)}\\
&=\int_\Omega c_2^{2}(c_1^{-2}-c_2^{-2})\int_0^T\partial_t^2p_1\phi dt dx.\end{aligned}\end{equation}
Let us consider the operator $K$ defined by
$$K\phi_0=\int_0^T\partial_t^2p_1\phi dt.$$
Using the fact that $p_1\in  C^2([0,+\infty);L^2(\Omega))$, $\phi\in C([0,T];L^2(\Omega))$ and applying Lemma \ref{l4} and estimate \eqref{p1bb}, \eqref{t6aa}, we obtain
$$\norm{K\phi_0}_{L^1(\Omega)}\leq T \norm{p_2}_{C^2([0,T];L^2(\Omega))}\norm{\phi}_{C([0,T];L^2(\Omega))}\leq C_*\norm{f_1}_{H^2(\Omega)}\norm{\phi_0}_{D(L^{-\frac{1}{2}})},$$
with a constant $C_*>0$ depending on  $\Omega$ and $\beta$.
Thus $K\in\mathcal B(D(L^{-\frac{1}{2}});L^1(\Omega))$ and we have
$$\norm{K}_{\mathcal B(D(L^{-\frac{1}{2}});L^1(\Omega))}\leq C_*\norm{f_1}_{H^2(\Omega)}.$$
In light of \eqref{L}, can define $K^*\in \mathcal B(L^\infty(\Omega);D(L^{\frac{1}{2}}))=\mathcal B(L^\infty(\Omega);H^{1}_0(\Omega))$  to be the adjoint operator of $K$ as follows
$$\int_\Omega \psi K\phi_0dx=\left\langle \phi_0, K^*\psi\right\rangle_{D(L^{-\frac{1}{2}});D(L^{\frac{1}{2}})},\quad \psi\in L^\infty(\Omega)$$
and, applying \eqref{L}, we recall that 
\begin{equation}\label{tt2}\norm{K^*}_{\mathcal B(L^\infty(\Omega);H^{1}_0(\Omega))}=C'\norm{K^*}_{\mathcal B(L^\infty(\Omega);D(L^{\frac{1}{2}}))}=C'\norm{K}_{\mathcal B(D(L^{-\frac{1}{2}});L^1(\Omega))}\leq C'C_*\norm{f_1}_{H^2(\Omega)},\end{equation}
with $C'>0$ depending only on $\Omega$.
In the same way, we define the map $\mathcal M:D(L^{-\frac{1}{2}})\ni \phi_0\mapsto \partial_\nu\phi\in \vphantom{H^1((0,T)\times\partial\Omega)}_0 H^1((0,T)\times\partial\Omega)'$ where $\phi$ solves in the transposition sense \eqref{eq6}. In view of Lemma \ref{l4} and the estimates \eqref{L} and  \eqref{l4m}, we have $\mathcal M\in \mathcal B(D(L^{-\frac{1}{2}});\vphantom{H^1((0,T)\times\partial\Omega)}_0 H^1((0,T)\times\partial\Omega)')$ and there exists a constant $C_*'>0$ depending only on $\Omega$  such that 
\begin{equation}\label{tt2a}\norm{\mathcal M^*}_{\mathcal B(\vphantom{H^1((0,T)\times\partial\Omega)}_0 H^1((0,T)\times\partial\Omega); H^{1}_0(\Omega))}\leq C'\norm{\mathcal M^*}_{\mathcal B(\vphantom{H^1((0,T)\times\partial\Omega)}_0 H^1((0,T)\times\partial\Omega); D(L^{\frac{1}{2}})}=C'\norm{\mathcal M}_{\mathcal B(D(L^{-\frac{1}{2}});\vphantom{H^1((0,T)\times\partial\Omega)}_0 H^1((0,T)\times\partial\Omega)')}\leq C_*'.\end{equation}
Thus,  \eqref{tt1} can be rewritten as
$$\left\langle \phi_0,K^*[c_2^{2}(c_1^{-2}-c_2^{-2})]-f\right\rangle_{D(L^{-\frac{1}{2}});D(L^{\frac{1}{2}})}=\left\langle\phi_0,\Lambda^*[\beta\partial_t p|_{(0,T)\times\partial\Omega}] +\mathcal M^* [p|_{(0,T)\times\partial\Omega}]  \right\rangle_{D(L^{-\frac{1}{2}});D(L^{\frac{1}{2}})}.$$
Since here $\phi_0\in D(L^{-\frac{1}{2}})$ is arbitrary chosen, we obtain the identity \eqref{t6a} by choosing $\mathcal G= K^*$, $\mathcal I=\Lambda^*$ and $\mathcal J=\mathcal M^*$. Moreover, we deduce \eqref{t6b} and \eqref{t6bbb} by applying the estimates \eqref{l4m} and \eqref{tt2}-\eqref{tt2a}. This completes the proof of the theorem.\qed

\section{Proof of Theorem \ref{t1}}

We fix $a\in(\frac{3}{4},1)$ to be determined and  denote  $f_j = f(c_j, \cdot)$.  Applying Theorem \ref{t6}, we obtain the following representation

\bel{t1c}\mathcal G[c_2^{2}(c_1^{-2}-c_2^{-2})]-(f_1-f_2)= \mathcal I[\beta\partial_t p|_{(0,T)\times\partial\Omega}] +\mathcal J [p|_{(0,T)\times\partial\Omega}],\ee
with $p=p_1-p_2$ solving \eqref{eq3}. In view of \eqref{ass1} and \eqref{t6b}, there exists a constant $C>0$ depending on $\Omega$ and $M$ such that
\bel{t1d}\norm{\mathcal G[c_2^{2}(c_1^{-2}-c_2^{-2})]}_{H^1(\Omega)}\leq C\norm{c_2^{2}(c_1^{-2}-c_2^{-2})}_{L^\infty(\Omega)}=C\norm{c_1^{-2}-c_2^{-2}}_{L^\infty(\Omega)}.\ee
On the other hand, we have
%$$c_1^{-2}-c_2^{-2}=\frac{c_2^2-c_1^2}{c_1^2c_2^2}=\frac{(c_2+c_1)(c_2-c_1)}{c_1^2c_2^2}= \frac{(c_2+c_1)((1-a)\mathds{1}_{\omega_2}-(1-a)\mathds{1}% _{\omega_1})}{c_1^2c_2^2}=(1-a) \frac{(c_2+c_1)(\mathds{1}_{\omega_2}-\mathds{1}_{\omega_1})}{c_1^2c_2^2}.$$

$$c_1^{-2}-c_2^{-2}= (a^{-2}-1)(\mathds{1}_{\omega_2}-\mathds{1}_{\omega_1} ). $$

Recalling that, for $j=1,2$, $\frac{1}{2}\leq a\leq c_j\leq 1$, we obtain
%$$\norm{c_1^{-2}-c_2^{-2}}_{L^\infty(\Omega)}\leq 32(1-a)\norm{\mathds{1}_{\omega_2}-\mathds{1}_{\omega_1}}_{L^\infty(\Omega)}.$$
$$ \norm{c_1^{-2}-c_2^{-2}}_{L^\infty(\Omega)}\leq 6(1-a)\norm{\mathds{1}_{\omega_2}-\mathds{1}_{\omega_1}}_{L^\infty(\Omega)}.$$

Combining this with \eqref{t1d}, we obtain
$$\norm{\mathcal G[c_2^{2}(c_1^{-2}-c_2^{-2})]}_{H^1(\Omega)}\leq 6C(1-a)\norm{\mathds{1}_{\omega_2}-\mathds{1}_{\omega_1}}_{L^\infty(\Omega)}.$$
Fixing
$$a_0=\max\left( \frac{3}{4}, 1-\frac{d}{6C}\right),$$
with $d>0$ the constant appearing in \eqref{ass2}, and applying \eqref{ass2}, for any $a\in(a_0,1]$, we find
$$\begin{aligned}\norm{\mathcal G[c_2^{2}(c_1^{-2}-c_2^{-2})]-(f_1-f_2)}_{H^1(\Omega)}&\geq \norm{f_1-f_2}_{H^1(\Omega)}-\norm{\mathcal G[c_2^{2}(c_1^{-2}-c_2^{-2})]}_{H^1(\Omega)}\\
&\geq (d-6C(1-a))\norm{\mathds{1}_{\omega_2}-\mathds{1}_{\omega_1}}_{L^\infty(\Omega)},\end{aligned}$$
where $(d-6C(1-a))>(d-6C(1-a_0))\geq0.$
Combining this with \eqref{t6bbb} and \eqref{t1c}, for any $a\in(a_0,1]$, we obtain
$$\norm{\mathds{1}_{\omega_2}-\mathds{1}_{\omega_1}}_{L^\infty(\Omega)}\leq\left(\frac{C\norm{\beta}_{L^\infty(\partial\Omega)}+C}{(d-6C(1-a))}\right)\norm{p}_{H^1((0,T)\times\partial\Omega)}$$
which clearly implies \eqref{maineq1}.

In order to complete the proof of the theorem, we only need to show \eqref{maineq2}. From now on, we denote by $C>0$ a constant depending on $\Omega$, $\beta$, $a$ and $M$ that may change from line to line. Recalling that $f_1=f_2$ and $\partial_\nu f_1= \partial_\nu f_2$ on $\partial\Omega$, we have
$$p(0,x)=f_1(x)-f_2(x)=0,\quad \partial_tp(0,x)=g_1(x)-g_2(x)=\beta^{-1}(x)(\partial_\nu f_2(x)-\partial_\nu f_1(x))=0,\quad x\in\partial\Omega.$$
Therefore, in view of \cite[Chapter I, Theorem 2.3]{Li2}, we can find $G\in H^2((0,T)\times\Omega)$ such that
\bel{t1f} G|_{(0,T)\times\partial\Omega}=p|_{(0,T)\times\partial\Omega},\quad G(0,\cdot)=\partial_tG(0,\cdot)\equiv0,\ee
\bel{t1g}\norm{G}_{H^2((0,T)\times\Omega)}\leq C_1\left(\|  p_1 -   p_2\|_{H^{\frac{3}{2}}((0, T)\times\partial \Omega  )}+\|  t^{-\frac{1}{2}}(\partial_t p_1 -   \partial_tp_2)\|_{L^2((0, T)\times\partial \Omega  )}\right),\ee
with $C_1>0$ depending only on $\Omega$ and $T$. Then, we can decompose $p$ into two terms $p=v+G$ such that $v$ solves the problem
\begin{equation}\label{eq6}\left\{\begin{array}{ll}\partial_t^2v-c_2^{2}\Delta_x v=c_2^{2}(c_1^{-2}-c_2^{-2})\partial_t^2p_1-(\partial_t^2G-c_2^{2}\Delta_x G),\quad &\textrm{in}\ (0,T)\times\Omega,\\  v(0,\cdot)=f,\quad \partial_tv(0,\cdot)=g,\quad &\textrm{in}\ \Omega,\\ v=0,\quad &\textrm{on}\ (0,T)\times\partial\Omega,\end{array}\right.\end{equation}
Applying the observability inequality of Proposition \ref{p3}, we obtain
$$\begin{aligned}\norm{f}_{H^1(\Omega))}^2&\leq C\left(\norm{\partial_\nu v}_{L^2((0,T)\times\partial\Omega)}^2+\norm{c_2^{2}(c_1^{-2}-c_2^{-2})\partial_t^2p_1-(\partial_t^2G-c_2^{2}\Delta_x G)}^2_{L^2((0,T)\times\Omega)}\right)\\
&\leq C\left(\norm{\partial_\nu v}_{L^2((0,T)\times\partial\Omega)}^2+2\norm{c_2^{2}(c_1^{-2}-c_2^{-2})\partial_t^2p_1}^2_{L^2((0,T)\times\Omega)}+2\norm{(\partial_t^2G-c_2^{2}\Delta_x G)}^2_{L^2((0,T)\times\Omega)}\right)\\
&\leq C\left(\norm{\partial_\nu v}_{L^2((0,T)\times\partial\Omega)}^2+\norm{\mathds{1}_{\omega_2}-\mathds{1}_{\omega_1}}_{L^\infty(\Omega)}^2\norm{p_1}_{C^2([0,T];L^2(\Omega))}^2+\norm{G}_{H^2(0,T)\times\Omega)}^2\right).\end{aligned}$$
Combining this with \eqref{ass1}, \eqref{maineq1}, Proposition \ref{p1} and \eqref{t1g}, we obtain
\bel{t1h}\norm{f}_{H^1(\Omega))}\leq C\left(\norm{\partial_\nu v}_{L^2((0,T)\times\partial\Omega)}+\|  p_1 -   p_2\|_{H^{\frac{3}{2}}((0, T)\times\partial \Omega  )}+\|  t^{-\frac{1}{2}}(\partial_t p_1 -   \partial_tp_2)\|_{L^2((0, T)\times\partial \Omega  )}\right)\ee
Meanwhile, we get
$$\partial_\nu v(t,x)=\partial_\nu p(t,x)-\partial_\nu G(t,x)=-\beta(x)\partial_tp-\partial_\nu G(t,x),\quad (t,x)\in(0,T)\times\partial\Omega$$
and it follows
$$\begin{aligned}\norm{\partial_\nu v}_{L^2((0,T)\times\partial\Omega)}&\leq \norm{\beta}_{L^\infty(\partial\Omega)}\norm{\partial_t p}_{L^2((0,T)\times\partial\Omega)}+\norm{\partial\nu G}_{L^2((0,T)\times\partial\Omega)}\\
&\leq C(\norm{p}_{H^1((0,T)\times\partial\Omega)}+\norm{ G}_{H^2((0,T)\times\Omega)})\\
&\leq C\left(\|  p_1 -   p_2\|_{H^{\frac{3}{2}}((0, T)\times\partial \Omega  )}+\|  t^{-\frac{1}{2}}(\partial_t p_1 -   \partial_tp_2)\|_{L^2((0, T)\times\partial \Omega  )}\right).\end{aligned}$$
Combining this last estimate with \eqref{t1h}, we obtain \eqref{maineq2}. This completes the proof of the theorem.

%%%%%%%%%%%%%%%%%%%%%%%%%%%%%
   
%

\end{document}